\newtheorem{theorem}{Theorem}[section]
\newtheorem{lemma}[theorem]{Lemma}
\theoremstyle{definition}
\newtheorem{definition}[theorem]{Definition}
\newtheorem{prop}[theorem]{Proposition}
\newtheorem{cor}[theorem]{Corollary}
\theoremstyle{remark}
\newtheorem{remark}[theorem]{Remark}
\numberwithin{equation}{section}
\begin{document}

\title{Simplified and Equivalent Characterizations of Banach Limit Functional and Strong Almost Convergence}

\author{Chao You}
\address{Department of Mathematics $\&$ The Academy of Fundamental and Interdisciplinary Science\\Harbin Institute of Technology\\
Harbin 150001, Heilongjiang, China}
\email{hityou1982@gmail.com}

\subjclass[2000]{46A22; 46B45}


\dedicatory{This paper is dedicated to Prof. Lixin Xuan for his
great encouragement.}

\keywords{Banach limit functional, strong almost convergence,
quasi-almost convergence, weaker almost convergence}

\begin{abstract}
In this paper, we give simplified and equivalent characterizations
of Banach limit functional, which is the minimum requirement to
characterize strong almost convergence. With this machinery, we show
that Hajdukovi\'{c}'s\cite{Hajdukovic} quasi-almost convergence is
equivalent to strong almost convergence.
\end{abstract}

\maketitle

\section{Introduction}
Let $l^{\infty}$ be the Banach space of bounded sequences of real
numbers $x:=\{x(n)\}_{n=1}^{\infty}$ with supremum norm
$\|x\|_{\infty}:=\sup_n|x(n)|$. As an application of Hahn-Banach
theorem, a \emph{Banach limit} $L$ is a bounded linear functional on
$l^{\infty}$, which satisfies the following properties:

(i) If $x=\{x(n)\}_{n=1}^{\infty}\in l^{\infty}$ and $x(n)\geq 0$,
then $L(x)\geq 0$;

(ii) If $x=\{x(n)\}_{n=1}^{\infty}\in l^{\infty}$ and
$Tx=\{x(2),x(3),\ldots\}$, where $T$ is the \emph{left-shift
operator}, then $L(x)=L(Tx)$;

(iii) $\|L\|=1$;

(iv) If $x=\{x(n)\}_{n=1}^{\infty}\in c$, where $c$ is the Banach
subspace of $l^{\infty}$ consisting of convergent sequences, then
$L(x)=\lim_{n\rightarrow\infty}x(n)$.

Since the Hahn-Banach norm-preserving extension is not unique, there
must be many Banach limits in the dual space of $l^{\infty}$, and
usually different Banach limits have different values at the same
element in $l^{\infty}$. However, there indeed exist sequences whose
values of all Banach limits are the same. Condition (iv) is a
trivial example. Besides that, there also exist nonconvergent
sequences satisfying this property, for such examples please see
\cite{Feng} and \cite{You}. In \cite{Lorentz}, G. G. Lorentz called
a sequence $x=\{x(n)\}_{n=1}^{\infty}$ \emph{almost convergent}, if
all Banach limits of $x$, $L(x)$, are the same, and this unique
Banach limit is called \emph{F-limit} of $x$. In his paper, Lorentz
proved the following criterion for almost convergent sequences:

\begin{theorem}
A sequence $x=\{x(n)\}_{n=1}^{\infty}\in l^{\infty}$ is almost
convergent with F-limit $L(x)$ if and only if
$$
\lim_{n\rightarrow\infty}\frac{1}{n}\sum_{t=i}^{i+n-1}x(t)=L(x)
$$
uniformly in $i$.
\end{theorem}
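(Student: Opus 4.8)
The plan is to prove the two implications separately. I expect the reverse implication (``uniform Ces\`aro convergence $\Rightarrow$ almost convergence'') to be soft, while the forward implication carries the real content. Throughout I would write $\sigma_n(y)(i):=\frac1n\sum_{t=i}^{i+n-1}y(t)$ for $y\in l^\infty$.

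\emph{Reverse direction.} Let $\phi$ be an arbitrary Banach limit and let $\ell$ be the common value of the averages. Using shift invariance (property (ii)) I get $\phi(x)=\phi(T^{j}x)$ for every $j\ge0$, hence $\phi(x)=\phi\bigl(\frac1n\sum_{j=0}^{n-1}T^{j}x\bigr)$ by linearity, for every $n$. The sequence $\frac1n\sum_{j=0}^{n-1}T^{j}x$ is precisely $i\mapsto\sigma_n(x)(i)$, which by hypothesis converges in $\|\cdot\|_\infty$ to the constant sequence with value $\ell$. Since $\|\phi\|=1$ (property (iii)) and, by (iv), $\phi$ sends that constant sequence to $\ell$, continuity of $\phi$ forces $\phi(x)=\ell$. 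As $\phi$ was arbitrary, $x$ is almost convergent with F-limit $\ell$. This part should be essentially immediate.

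\emph{Forward direction.} Here I would argue by contradiction: assume $x$ has F-limit $\ell$ but $\sigma_n(x)(i)\to\ell$ fails to be uniform in $i$. The key device is the functional $q(y):=\limsup_{n\to\infty}\sup_i\sigma_n(y)(i)$ on $l^\infty$. I would first record its properties: since $\sigma_n$ is linear in $y$ and $|\sigma_n(y)(i)|\le\|y\|_\infty$, the functional $q$ is positively homogeneous and subadditive with $q(y)\le\|y\|_\infty$; moreover $q(y)=\lim_n y(n)$ for $y\in c$; and the telescoping identity $\sigma_n(y-Ty)(i)=\frac1n\bigl(y(i)-y(i+n)\bigr)$ gives $q(y-Ty)\le0$ and $q(Ty-y)\le0$ for every $y$. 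Next, the failure of uniform convergence translates into $q(x)>\ell$ or $q(-x)>-\ell$, and by replacing $x$ with $-x$ and $\ell$ with $-\ell$ if necessary I may assume $q(x)>\ell$. Then I would extend the functional $\lim$ from $c$ to $c+\operatorname{span}\{x\}$ by assigning $x$ the value $q(x)$ --- checking that this is still dominated by $q$, which reduces to the inequalities $q(x)+\lim y\le q(y+x)$ and $\lim y\le q(x)+q(y-x)$ for $y\in c$, both of which follow from subadditivity together with $q|_c=\lim$ --- and invoke Hahn--Banach to obtain a linear $\phi$ on $l^\infty$ with $\phi\le q$. Finally I would check that $\phi$ is a Banach limit: positivity because $z\ge0$ gives $\phi(-z)\le q(-z)\le0$; $\|\phi\|=1$ from $\phi\le q\le\|\cdot\|_\infty$ applied to $\pm z$ and from $\phi$ of the constant sequence $1$ being $1$; agreement with $\lim$ on $c$ by construction; and shift invariance from $\phi(z-Tz)\le q(z-Tz)\le0$ together with its mirror image. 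Then $\phi(x)=q(x)>\ell$ contradicts the almost convergence of $x$.

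The step I expect to be the main obstacle is this construction of the offending Banach limit --- in particular, ensuring that the Hahn--Banach extension dominated by $q$ is genuinely translation invariant. This is exactly what dictates the precise choice of $q$: it must be blind to the telescoping perturbations $y-Ty$ yet large enough at $x$ to witness the defect. The remaining verifications (subadditivity and the elementary bounds for $q$, positivity, the norm computation, the consistency check for the one-dimensional extension) are short estimates that I would not expect to cause trouble.
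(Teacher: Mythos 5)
Your proposal is correct, and it is worth noting at the outset that the paper itself does not prove this statement: Theorem 1.1 is imported from Lorentz, and the paper's actual contribution is the vector-valued analogue (the characterization of strong almost convergence by uniform convergence of averages, Theorem 3.8, via Lemma 3.2 and Theorem 3.4). Measured against that machinery, your argument is structurally parallel but adapted to the scalar, ordered setting. The easy direction is the same in both: average the shift invariance and use $\|\phi\|=1$ plus continuity. For the hard direction, both you and the paper manufacture a witness functional by Hahn--Banach domination against a sublinear gauge built from uniform Ces\`aro averages, with shift invariance extracted from the telescoping estimate ($q(y-Ty)\le 0$ for you, $p(Tx-x)=0$ in Theorem 2.10). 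The differences: the paper's gauge $p(x)=\lim_n\sup_j\frac1n\bigl\|\sum_{i=0}^{n-1}x_{i+j}\bigr\|_V$ is a genuine seminorm (norms of averages), which is forced in a normed space without order but only yields a ``Banach limit functional'' (norm $\le 1$, shift-invariant); your one-sided $q=\limsup_n\sup_i\sigma_n(\cdot)(i)$ exploits the order on $\mathbb{R}$ to recover all four classical axioms, including positivity and agreement with $\lim$ on $c$ --- which you genuinely need, since the statement concerns Banach limits in the classical sense, and a seminorm-dominated functional alone would not pin down the value on $c$. You also sidestep the Fekete-type Lemma 2.2 by working with $\limsup$ rather than proving the limit exists, and you extend from $c+\mathrm{span}\{x\}$ rather than from $\mathrm{span}\{x\}$ alone, buying $c$-compatibility by construction instead of deriving it afterwards as in Proposition 2.13. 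All the verifications you defer (subadditivity of $q$, $q|_c=\lim$, the domination check for the one-dimensional extension, and the reduction of non-uniform convergence to ``$q(x)>\ell$ or $q(-x)>-\ell$'') do go through as you indicate.
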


Similar to this theorem, recently D. Hajdukovi\'{c}\cite{Hajdukovic}
and S. Shaw et al\cite{Shaw} generalized the concept of almost
convergence to bounded sequences in normed vector space and bounded
continuous vector-valued functions, respectively.

Suppose $(V,\|\cdot\|_V)$ is a complex normed vector space. Let
$l^{\infty}(V)$ be the normed vector space of bounded $V$-valued
sequences $x:=\{x_n\}^{\infty}_{n=1}$ with supremum norm
$\|x\|_{\infty}:=\sup_n\|x_n\|_V$. In particular, $c(V)$ is the
subspace of $l^{\infty}(V)$, which consists of convergent $V$-valued
sequences. For any $v\in V$, let $\widetilde{v}:=\{v,v,\ldots\}$
denote the sequence with constant entry $v$, clearly
$\widetilde{v}\in c(V)$.

\begin{definition}[\cite{Hajdukovic}]
Suppose $x=\{x_n\}^{\infty}_{n=1}\in l^{\infty}(V)$ and $v\in V$.
$\{x_n\}_{n=1}^{\infty}$ is called \emph{almost convergent} to $v$
if
$$\lim_{n\rightarrow\infty}\frac{1}{n}\sum_{i=0}^{n-1}x_{i+j}=v$$
uniformly in $j$.
\end{definition}

Let $C_b([0,\infty),V)$ be the normed vector space of bounded
$V$-valued continuous functions $f$ with supremum norm
$\|f\|:=\sup_{t\in[0,\infty)}\|f(t)\|_V$.

\begin{definition}[\cite{Shaw}]
Suppose $f\in C_b([0,\infty),V)$ and $v\in V$. $f(t)$ is called
\emph{almost convergent} to $v$ when $t\rightarrow\infty$ if
$$\lim_{t\rightarrow\infty}\frac{1}{t}\int_a^{a+t}f(s)ds=v$$
uniformly in $a$.
\end{definition}

In\cite{Hajdukovic}, Hajdukovi\'{c} also gave the concept of
\emph{quasi-almost convergence} in terms of some kind of linear
functionals, which are similar to Banach limit in the real sequence
case. First, Hajdukovi\'{c} defined a semi-norm $q$ on
$l^{\infty}(V)$ as following:

For $x=\{x_n\}^{\infty}_{n=1}\in l^{\infty}(V)$,
\begin{equation}
q(x)=\varlimsup_{n\rightarrow\infty}\left(\sup_j\frac{1}{n}\left\|\sum_{i=0}^{n-1}x_{i+jn}\right\|_V\right).
\end{equation}

And then, he showed that there exists the family $\Pi$ of nontrivial
linear functionals $L$ defined on $l^{\infty}(V)$ such that for all
$x=\{x_n\}^{\infty}_{n=1}\in l^{\infty}(V)$, the following
assertions are valid:

(i) $L(Tx)=L(x)$;

(ii) $|L(x)|\leq q(x)$;

(iii) $L(x-\widetilde{v})=0$ if and only if $q(x-\widetilde{v})=0$.

\begin{definition}
A sequence $x=\{x_n\}^{\infty}_{n=1}\in l^{\infty}(V)$ is called
\emph{quasi-almost convergent} to $v\in V$ if $\forall L\in \Pi$,
$L(x-\widetilde{v})=0$.
\end{definition}

Similar to the definition of almost convergence, Hajdukovi\'{c} gave
the following equivalent characterization of quasi-almost
convergence:

\begin{theorem}
Suppose $x=\{x_n\}^{\infty}_{n=1}\in l^{\infty}(V)$ and $v\in V$.
$\{x_n\}_{n=1}^{\infty}$ is quasi-almost convergent to $v$ if and
only if
$$\lim_{n\rightarrow\infty}\frac{1}{n}\sum_{i=0}^{n-1}x_{i+jn}=v$$
uniformly in $j$.
\end{theorem}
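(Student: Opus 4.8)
The plan is to reduce quasi-almost convergence of $x$ to $v$ to the single scalar condition $q(x-\widetilde v)=0$, using the properties of Hajdukovi\'{c}'s functional family $\Pi$, and then to read the desired uniform limit directly off the definition of the seminorm $q$. Put $y:=x-\widetilde v=\{x_n-v\}_{n=1}^{\infty}$, which again belongs to $l^{\infty}(V)$; for every $n\ge1$ and every $j$,
\[
\frac1n\sum_{i=0}^{n-1}y_{i+jn}=\Bigl(\frac1n\sum_{i=0}^{n-1}x_{i+jn}\Bigr)-v,
\]
so the definition of $q$ gives
\[
q(x-\widetilde v)=\varlimsup_{n\to\infty}\Bigl(\sup_j\Bigl\|\Bigl(\frac1n\sum_{i=0}^{n-1}x_{i+jn}\Bigr)-v\Bigr\|_V\Bigr).
\]

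Next I would establish the equivalence \emph{($x$ is quasi-almost convergent to $v$) $\iff$ $q(x-\widetilde v)=0$}. If $q(y)=0$, then for every $L\in\Pi$ property (ii) gives $|L(y)|\le q(y)=0$, hence $L(x-\widetilde v)=0$ for all $L\in\Pi$, which is exactly quasi-almost convergence of $x$ to $v$. Conversely, if $x$ is quasi-almost convergent to $v$, then $L(x-\widetilde v)=0$ for every $L\in\Pi$; since $\Pi$ is nonempty (this is part of Hajdukovi\'{c}'s construction), fix one such $L$, and property (iii) then yields $q(x-\widetilde v)=0$.

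It remains to translate $q(y)=0$ into the stated uniform limit, which is routine. Write $a_n:=\sup_j\|(\frac1n\sum_{i=0}^{n-1}x_{i+jn})-v\|_V\ge0$, so $q(y)=\varlimsup_n a_n$. Since the $a_n$ are nonnegative, $\varlimsup_n a_n=0$ holds if and only if $\lim_n a_n=0$; and by the meaning of the supremum, $a_n\to0$ says precisely that for every $\varepsilon>0$ there is an $N$ with $\|(\frac1n\sum_{i=0}^{n-1}x_{i+jn})-v\|_V<\varepsilon$ for all $n\ge N$ and all $j$, i.e. $\frac1n\sum_{i=0}^{n-1}x_{i+jn}\to v$ uniformly in $j$. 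Chaining this with the previous paragraph proves the theorem.

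The argument is short because the analytic content has been absorbed into the construction of $\Pi$, and the step I would be most careful about is the forward implication: it uses property (iii) in its strong form, namely that the vanishing of $q$ on $x-\widetilde v$ is already forced by a \emph{single} functional of $\Pi$ annihilating $x-\widetilde v$, not merely by the inequality (ii). This is exactly what makes quasi-almost convergence, though defined via linear functionals, still coincide with an elementary averaging condition — in contrast with the scalar situation, where agreement of all Banach limits does not by itself deliver Lorentz's uniform Ces\`{a}ro criterion. I would also note in passing that the averages $\frac1n\sum_{i=0}^{n-1}x_{i+jn}$ here run over the disjoint blocks $\{jn,\ldots,jn+n-1\}$ rather than over sliding windows, so this characterization is a priori different from the almost-convergence condition introduced earlier; reconciling the two is a separate matter and is not needed for the present statement.
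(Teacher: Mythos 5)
Your argument is correct, but note that the paper itself gives no proof of this statement: it is quoted in the introduction as Hajdukovi\'{c}'s theorem from \cite{Hajdukovic}, so there is no in-paper proof to compare against. Your three steps are all sound given the stated properties of $\Pi$: the identity $q(x-\widetilde v)=\varlimsup_n\sup_j\|\frac1n\sum_{i=0}^{n-1}x_{i+jn}-v\|_V$ is immediate from linearity of the block averages; property (ii) gives $q(x-\widetilde v)=0\Rightarrow L(x-\widetilde v)=0$ for all $L\in\Pi$; and for the converse you correctly observe that property (iii), as stated, is an equivalence valid for \emph{each} $L\in\Pi$, so a single (nontrivial, hence existing) member of $\Pi$ suffices. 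The translation of $\varlimsup_n a_n=0$ with $a_n\ge0$ into uniform convergence is routine and handled properly. It is worth comparing your proof with what the paper does for the analogous seminorm $p$ (sliding windows instead of disjoint blocks): there, the corresponding converse in Lemma \ref{vanish} cannot lean on an axiom like (iii) and instead requires a Hahn--Banach construction of a particular functional $L_0$ with $L_0(x)=p(x)$. Your remark that property (iii) absorbs exactly this analytic content is the right diagnosis; if one wanted a proof not relying on (iii) as a black box, one would have to reproduce that Hahn--Banach step for $q$, which the paper's Remark after Lemma \ref{vanish} asserts goes through verbatim. Your closing caveat that the block-average condition is a priori different from the sliding-window one, and that their equivalence is a separate theorem (the paper's main point), is also accurate.
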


From this theorem, it seems that quasi-almost convergence is weaker
than almost convergence. However, in this paper, we will show that
actually they are equivalent! In section \ref{Banach limit
functional}, we will define the concept of \emph{Banach limit
functional}, which is a generalization of Banach limit in bounded
real sequence case but much simpler, even than Hajdukovi\'{c}'s
linear functionals $\Pi$. To show the existence and sufficiency of
Banach limit functionals, we provide a natural construction of
Banach limit functionals induced from $B_1(V^*)$. Then we will give
an equivalent characterization of Banach limit functional, which
shows that some items in traditional or Hajdukovi\'{c}'s definition
of Banach limit are equivalent or one could imply another, so it is
unnecessary to put them together in the definition.

In Section \ref{Strong almost convergence}, we define the concept of
\emph{strong almost convergence} in terms of Banach limit
functionals, and show that it is equivalent to almost convergence in
\cite{Hajdukovic}, then it is immediate that Hajdukovi\'{c}'s
quasi-almost convergence is equivalent to almost convergence too. We
also show that our almost convergence is stronger than that of J.
Kurtz's\cite{Kurtz}, so that's why we call it strong almost
convergence. Some basic properties of strong almost convergence are
also discussed. In particular, we show that though strong almost
convergence is weaker than norm convergence, corresponding
completenesses with respect to the two convergences are the same.

In the end, we point out that all definitions and results here could
be applied to bounded continuous functions exactly word by word from
summation to integration. Thus, to save space, we don't restate them
again.

\section{Banach Limit Functional of Bounded Sequences in Normed Vector
Space}\label{Banach limit functional}

\begin{definition}\label{Banach limit}
A bounded linear functional $L$ on $l^{\infty}(V)$ is called a
\emph{Banach limit functional} if it satisfies the following two
conditions:

(i) $\|L\|\leq 1$;

(ii) $\forall x=\{x_n\}^{\infty}_{n=1}\in l^{\infty}(V)$ and
$Tx=\{x_2,x_3,\ldots\}$, then $L(Tx)=L(x)$.
\end{definition}

To see the existence and sufficiency of Banach limit functionals,
let us begin with the following lemma, which is similar to that in
Sucheston's paper\cite{Sucheston}.

\begin{lemma}\label{q is well-defined}
$\forall x=\{x_n\}_{n=1}^{\infty}\in l^{\infty}(V)$,
$$\lim_{n\rightarrow\infty}\left(\sup_j\frac{1}{n}\left\|\sum_{i=0}^{n-1}x_{i+j}\right\|_V\right)$$
exists.
\end{lemma}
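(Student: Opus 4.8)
The plan is to show that the sequence $a_n := \sup_j \frac{1}{n}\bigl\|\sum_{i=0}^{n-1} x_{i+j}\bigr\|_V$ is \emph{subadditive} in a suitable sense and then invoke Fekete's lemma. Actually, the cleanest route is to prove that for all positive integers $m,n$ and all $j$,
$$
\Bigl\|\sum_{i=0}^{m+n-1} x_{i+j}\Bigr\|_V \;\le\; \Bigl\|\sum_{i=0}^{m-1} x_{i+j}\Bigr\|_V + \Bigl\|\sum_{i=0}^{n-1} x_{i+j+m}\Bigr\|_V,
$$
which is just the triangle inequality after splitting the sum at index $m$. Taking the supremum over $j$ on the right (noting $j+m$ ranges over a subset of all shifts), we get $(m+n)\,a_{m+n} \le m\,a_m + n\,a_n$, i.e.\ the sequence $\{n a_n\}$ is subadditive. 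Fekete's subadditive lemma then gives that $a_n = \frac{n a_n}{n}$ converges to $\inf_n a_n$, which is finite and nonnegative since $0 \le a_n \le \|x\|_\infty$ for every $n$.

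First I would establish the subadditivity inequality carefully: fix $m, n \ge 1$ and an arbitrary shift $j \ge 1$; write $\sum_{i=0}^{m+n-1} x_{i+j} = \sum_{i=0}^{m-1} x_{i+j} + \sum_{i=m}^{m+n-1} x_{i+j}$ and reindex the second sum as $\sum_{i=0}^{n-1} x_{i+(j+m)}$. Apply the triangle inequality in $(V,\|\cdot\|_V)$, then bound each term by its supremum over all shifts: the first term is $\le m\,a_m$ and the second is $\le n\,a_n$ (since $j+m$ is a legitimate shift). As this holds for every $j$, take the supremum over $j$ on the left to conclude $(m+n) a_{m+n} \le m a_m + n a_n$. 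Second, I would record that $\{n a_n\}$ is a nonnegative subadditive sequence — nonnegativity and the bound $n a_n \le n\|x\|_\infty$ are immediate from $\|\sum_{i=0}^{n-1} x_{i+j}\|_V \le n \sup_k \|x_k\|_V$. Third, I would apply Fekete's lemma in the form: if $b_n \ge 0$ and $b_{m+n} \le b_m + b_n$, then $b_n/n \to \inf_k b_k/k$; here $b_n = n a_n$, so $a_n \to \inf_k a_k \ge 0$, proving existence of the limit.

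I do not anticipate a serious obstacle; the only mild subtlety is the reindexing of shifts — one must check that when we pass from shift $j$ to shift $j+m$ we stay within the index set over which the supremum is taken, which is automatic since shifts run over all of $\{1, 2, \dots\}$ (or $\{0,1,\dots\}$, depending on convention) and adding $m$ keeps us inside. If the paper prefers to avoid citing Fekete's lemma, I would instead include the short standard argument: fix $m$, write any $n = qm + r$ with $0 \le r < m$, use subadditivity to get $n a_n \le q(m a_m) + r a_r \le q m a_m + r\|x\|_\infty$, divide by $n$, and let $n \to \infty$ to obtain $\limsup_n a_n \le a_m$ for every $m$, hence $\limsup_n a_n \le \inf_m a_m \le \liminf_n a_n$, which forces convergence. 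Either way the argument is elementary; the real content is simply the triangle-inequality-based subadditivity, exactly parallel to Sucheston's treatment referenced in the statement.
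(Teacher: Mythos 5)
Your proposal is correct and follows essentially the same route as the paper: both establish $(m+n)a_{m+n}\leq ma_m+na_n$ by splitting the sum at index $m$, reindexing the tail as a shift by $j+m$, and then concluding via the standard subadditivity argument (the paper writes $n=r+km$ and derives $\limsup_n a_n\leq a_m$ for each $m$, which is exactly the Fekete-style fallback you describe). No gaps; the reindexing subtlety you flag is handled identically in the paper.
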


\begin{proof}
Set
$$c_n=\sup_j\frac{1}{n}\left\|\sum_{i=0}^{n-1}x_{i+j}\right\|_V.$$ We need to show that
$\lim_{n\rightarrow\infty}c_n$ exists. For each $m$, $n$, one has
\begin{align*}
\sup_j\left\|\sum_{i=0}^{m+n-1}x_{i+j}\right\|_V\leq&\sup_j\left(\left\|\sum_{i=0}^{m-1}x_{i+j}\right\|_V
+\left\|\sum_{i=m}^{m+n-1}x_{i+j}\right\|_V\right)\\
\leq&\sup_j\left\|\sum_{i=0}^{m-1}x_{i+j}\right\|_V+\sup_j\left\|\sum_{i=m}^{m+n-1}x_{i+j}\right\|_V\\
\leq&\sup_j\left\|\sum_{i=0}^{m-1}x_{i+j}\right\|_V+\sup_j\left\|\sum_{i=0}^{n-1}x_{i+j}\right\|_V,
\end{align*}
i.e., $(m+n)c_{m+n}\leq mc_m+nc_n$. Thus $$(r+km)c_{r+km}\leq
rc_r+kmc_{km}\leq rc_r+kmc_m.$$ Dividing by $r+km$ and letting
$k\rightarrow\infty$ with $r$, $m$ fixed, we obtain
$$\limsup_{k\rightarrow\infty}c_{r+km}\leq c_m.$$
Since this holds for $r=0,1,\ldots,m-1$,
$\limsup_{n\rightarrow\infty}c_n\leq c_m$ for each $m$, and hence
$\limsup_{n\rightarrow\infty}c_n\leq\liminf_{m\rightarrow\infty}c_m$,
which implies that $\lim_{n\rightarrow\infty}c_n$ exists.
\end{proof}

\begin{definition}
For any $x\in l^{\infty}(V)$, define
\begin{equation}
p(x)=\lim_{n\rightarrow\infty}\left(\sup_j\frac{1}{n}\left\|\sum_{i=0}^{n-1}x_{i+j}\right\|_V\right).
\end{equation}
\end{definition}

From Lemma \ref{q is well-defined}, it is easy to see that $p$ is a
well-defined seminorm on $l^{\infty}(V)$.

\begin{lemma}\label{xiangdeng}
If $x=\{x_n\}_{n=1}^{\infty}\in l^{\infty}(V)$ such that
$$\lim_{n\rightarrow\infty}\frac{1}{n}\left\|\sum_{i=0}^{n-1}x_{i+j}\right\|_V=m$$
uniformly in $j$, then
$$\lim_{n\rightarrow\infty}\left(\sup_j\frac{1}{n}\left\|\sum_{i=0}^{n-1}x_{i+j}\right\|_V\right)
=\lim_{n\rightarrow\infty}\frac{1}{n}\left\|\sum_{i=0}^{n-1}x_{i+j}\right\|_V=m.$$
\end{lemma}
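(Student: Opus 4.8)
The plan is to strip the statement down to what it really is — a fact about uniform convergence of the real-valued functions $j\mapsto \frac1n\bigl\|\sum_{i=0}^{n-1}x_{i+j}\bigr\|_V$ — and then simply sandwich the supremum. Write
$$a_n(j):=\frac{1}{n}\left\|\sum_{i=0}^{n-1}x_{i+j}\right\|_V,\qquad c_n:=\sup_j a_n(j).$$
Since $x\in l^{\infty}(V)$ we have $a_n(j)\le\|x\|_{\infty}$ for all $n,j$, so each $c_n$ is a finite nonnegative real number; moreover Lemma~\ref{q is well-defined} already guarantees that $\lim_{n\to\infty}c_n$ exists (it is $p(x)$), so the only thing left is to identify this limit with $m$.

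First I would fix $\varepsilon>0$ and invoke the hypothesis that $a_n(j)\to m$ uniformly in $j$: there is an $N$ such that for every $n\ge N$ and every $j$,
$$m-\varepsilon< a_n(j)< m+\varepsilon.$$
Taking the supremum over $j$ in the right-hand inequality yields $c_n\le m+\varepsilon$; picking any single index $j_0$ and using the left-hand inequality yields $c_n\ge a_n(j_0)> m-\varepsilon$. Hence $|c_n-m|\le\varepsilon$ for all $n\ge N$, so $\lim_{n\to\infty}c_n=m$. Since by hypothesis $\lim_{n\to\infty}a_n(j)=m$ as well (this is just the uniform limit read at a fixed $j$), the full chain of equalities in the statement follows.

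There is essentially no obstacle beyond bookkeeping. The one subtlety worth stating explicitly is that the supremum of a family of numbers each lying in $(m-\varepsilon,m+\varepsilon)$ again lies in $[m-\varepsilon,m+\varepsilon]$ — not merely in a $2\varepsilon$-neighbourhood of $m$ — and this is precisely where \emph{uniformity} of the convergence is used: with only pointwise convergence each $j$ could demand its own threshold $N_j$, and no single $N$ would control $c_n$. I would also remark that the argument uses nothing about the partial-sum or norm structure of $a_n(j)$; it is a general fact that if bounded functions $f_n(j)$ converge uniformly in $j$ to a constant $m$, then $\sup_j f_n(j)\to m$.
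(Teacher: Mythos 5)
Your proof is correct and follows essentially the same route as the paper's: fix $\varepsilon>0$, use uniformity to trap $a_n(j)$ in $(m-\varepsilon,m+\varepsilon)$ for all $j$ simultaneously once $n$ is large, and then pass to the supremum to conclude $m-\varepsilon< \sup_j a_n(j)\le m+\varepsilon$. Your explicit remark about where uniformity is indispensable is a nice addition but does not change the argument.
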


\begin{proof}
$\forall \varepsilon>0$, since
$$\lim_{n\rightarrow\infty}\frac{1}{n}\left\|\sum_{i=0}^{n-1}x_{i+j}\right\|_V=m$$
uniformly in $j$, there exists $N\in \mathbb{N}$ such that for any
$j\in \mathbb{N}$ if $n>N$, then
$$\left|\frac{1}{n}\left\|\sum_{i=0}^{n-1}x_{i+j}\right\|_V-m\right|<\varepsilon,$$
i.e.,
$$m-\varepsilon<\frac{1}{n}\left\|\sum_{i=0}^{n-1}x_{i+j}\right\|_V<m+\varepsilon.$$
Hence
$$m-\varepsilon<\sup_j\frac{1}{n}\left\|\sum_{i=0}^{n-1}x_{i+j}\right\|_V\leq
m+\varepsilon.$$ Since $\varepsilon$ is arbitrary, it follows that
$$\lim_{n\rightarrow\infty}\left(\sup_j\frac{1}{n}\left\|\sum_{i=0}^{n-1}x_{i+j}\right\|_V\right)=m.$$
\end{proof}

\begin{lemma}\label{zero}
If $x=\{x_n\}_{n=1}^{\infty}\in c(V)$ such that
$\lim_{n\rightarrow\infty}x_n=0$, then
$$\lim_{n\rightarrow\infty}\frac{1}{n}\left\|\sum_{i=0}^{n-1}x_{i+j}\right\|_V=0$$
uniformly in $j$.
\end{lemma}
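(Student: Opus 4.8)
The statement to prove is: if $x=\{x_n\}_{n=1}^\infty\in c(V)$ with $\lim_{n\to\infty}x_n=0$, then $\frac1n\bigl\|\sum_{i=0}^{n-1}x_{i+j}\bigr\|_V\to 0$ uniformly in $j$. The plan is to split the sum into a "tail-controlled" block and a "bounded-head" block, exactly as in the classical Cesàro argument, but keeping careful track of the $j$-dependence to get uniformity.

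First I would fix $\varepsilon>0$ and use $x_n\to 0$ to choose $M\in\mathbb N$ such that $\|x_k\|_V<\varepsilon/2$ for all $k\ge M$. Also set $B=\|x\|_\infty=\sup_n\|x_n\|_V<\infty$, which is finite since $x\in l^\infty(V)$. Now for any $j\ge 1$ and any $n$, I would write
\[
\Bigl\|\sum_{i=0}^{n-1}x_{i+j}\Bigr\|_V\le \sum_{i=0}^{n-1}\|x_{i+j}\|_V=\sum_{\substack{0\le i\le n-1\\ i+j<M}}\|x_{i+j}\|_V+\sum_{\substack{0\le i\le n-1\\ i+j\ge M}}\|x_{i+j}\|_V.
\]
The first sum has at most $M$ terms (the indices $i+j$ lying in $\{1,\dots,M-1\}$), each bounded by $B$, so it is $\le MB$ — crucially, this bound is independent of $j$. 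The second sum has at most $n$ terms, each $<\varepsilon/2$, so it is $<n\varepsilon/2$. Dividing by $n$ gives
\[
\frac1n\Bigl\|\sum_{i=0}^{n-1}x_{i+j}\Bigr\|_V<\frac{MB}{n}+\frac{\varepsilon}{2}
\]
for every $j$. Then I would choose $N$ with $MB/N<\varepsilon/2$, so that for all $n>N$ and all $j$ the left side is $<\varepsilon$. Since $\varepsilon$ was arbitrary and $N$ does not depend on $j$, this is precisely uniform convergence to $0$.

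There is no real obstacle here; the only thing to be careful about is making sure the "head" bound $MB$ is genuinely uniform in $j$ — i.e. that for every $j$ at most $M$ of the indices $i+j$ (with $0\le i\le n-1$) can be smaller than $M$, which is clear since those indices form a set of consecutive integers contained in $\{1,\dots,M-1\}$. One could alternatively invoke Lemma 1.7 (\emph{xiangdeng}) afterward, but it is cleaner to prove the uniform statement directly as above.
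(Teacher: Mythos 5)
Your proof is correct and follows essentially the same route as the paper's: split the sum into the finitely many "head" terms with index below the tail threshold (bounded by a constant independent of $j$ and $n$) and the "tail" terms each of norm less than $\varepsilon/2$, then divide by $n$. The only cosmetic difference is that the paper makes an explicit case distinction between $j> N_1$ and $j\le N_1$, whereas you absorb both cases into one estimate by noting the head block is empty for large $j$; the bound $MB$ you use plays the role of the paper's $\|x_1\|_V+\cdots+\|x_{N_1}\|_V$.
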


\begin{proof}
$\forall \varepsilon>0$, since $\lim_{n\rightarrow\infty}x_n=0$,
there exists $N_1\in \mathbb{N}$ such that $\|x_n\|_V<\varepsilon/2$
if $n>N_1$. Choose $N_2$ such that
$(\|x_1\|_V+\|x_2\|_V+\cdots+\|x_{N_1}\|_V)/N_2<\varepsilon/2$. Let
$N=\max\{N_1,N_2\}$. Let $n>N$, for any $j\in\mathbb{N}$, if
$j>N_1$, then
$$\frac{1}{n}\left\|\sum_{i=0}^{n-1}x_{i+j}\right\|_V\leq\frac{\sum_{i=0}^{n-1}\|x_{i+j}\|_V}{n}
<\frac{n\varepsilon/2}{n}=\varepsilon/2;$$ if $j\leq N_1$,
$$
\frac{1}{n}\left\|\sum_{i=0}^{n-1}x_{i+j}\right\|_V\leq\frac{\sum_{i=0}^{N_1-j}\|x_{i+j}\|_V+\sum_{i=N_1-j+1}^{n-1}\|x_{i+j}\|_V}{n}
<\varepsilon/2+\varepsilon/2=\varepsilon.
$$
Hence
$$\lim_{n\rightarrow\infty}\frac{1}{n}\left\|\sum_{i=0}^{n-1}x_{i+j}\right\|_V=0$$
uniformly in $j$.
\end{proof}

\begin{cor}\label{jixian}
If $x=\{x_n\}_{n=1}^{\infty}\in c(V)$ with
$\lim_{n\rightarrow\infty}x_n=v\in V$, then
$$\lim_{n\rightarrow\infty}\frac{1}{n}\left\|\sum_{i=0}^{n-1}x_{i+j}\right\|_V=\|v\|_V$$
uniformly in $j$.
\end{cor}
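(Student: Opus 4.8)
The plan is to reduce this to Lemma \ref{zero} by splitting off the constant part of the sequence. Set $y_n := x_n - v$, so that $y = \{y_n\}_{n=1}^{\infty} \in c(V)$ with $\lim_{n\rightarrow\infty} y_n = 0$. Then for every $n$ and $j$ we have the exact identity
$$
\sum_{i=0}^{n-1} x_{i+j} = nv + \sum_{i=0}^{n-1} y_{i+j},
$$
so that
$$
\frac{1}{n}\left\|\sum_{i=0}^{n-1} x_{i+j}\right\|_V = \left\| v + \frac{1}{n}\sum_{i=0}^{n-1} y_{i+j}\right\|_V .
$$

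Next I would invoke the reverse triangle inequality in $(V,\|\cdot\|_V)$, which gives
$$
\left| \frac{1}{n}\left\|\sum_{i=0}^{n-1} x_{i+j}\right\|_V - \|v\|_V \right| \leq \frac{1}{n}\left\|\sum_{i=0}^{n-1} y_{i+j}\right\|_V
$$
for all $n$ and $j$. By Lemma \ref{zero} applied to the sequence $y$, the right-hand side tends to $0$ as $n\rightarrow\infty$ uniformly in $j$; hence so does the left-hand side, which is exactly the claimed uniform convergence to $\|v\|_V$.

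There is no real obstacle here: the only two ingredients are the linearity used to extract $nv$ and the estimate of Lemma \ref{zero}, with the reverse triangle inequality bridging the two. The one point to state carefully is that the bound above holds simultaneously for all $j$ with the same $n$, so that the uniformity in $j$ furnished by Lemma \ref{zero} transfers directly to the conclusion.
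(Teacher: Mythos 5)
Your proof is correct and follows essentially the same route as the paper: both apply Lemma \ref{zero} to the shifted sequence $x_n - v$ and then use the reverse triangle inequality to compare $\frac{1}{n}\bigl\|\sum_{i=0}^{n-1}x_{i+j}\bigr\|_V$ with $\|v\|_V = \frac{1}{n}\|nv\|_V$, with the uniformity in $j$ carrying over directly. No gaps.
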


\begin{proof}
Since $\lim_{n\rightarrow\infty}x_n=v$, i.e.,
$\lim_{n\rightarrow\infty}(x_n-v)=0$, it follows from Lemma
\ref{zero} that
$$\lim_{n\rightarrow\infty}\frac{1}{n}\left\|\sum_{i=0}^{n-1}x_{i+j}-nv\right\|_V=0$$
uniformly in $j$. Since
$$
\left|\frac{1}{n}\left\|\sum_{i=0}^{n-1}x_{i+j}\right\|_V-\|v\|_V\right|=
\frac{1}{n}\left|\left\|\sum_{i=0}^{n-1}x_{i+j}\right\|_V-\|nv\|_V\right|\leq
\frac{1}{n}\left\|\sum_{i=0}^{n-1}x_{i+j}-nv\right\|_V,
$$
it follows that
$$\lim_{n\rightarrow\infty}\frac{1}{n}\left\|\sum_{i=0}^{n-1}x_{i+j}\right\|_V=\|v\|_V$$
uniformly in $j$.
\end{proof}

\begin{definition}
Suppose that $f\in V^*$ and $\|f\|\leq1$, define the \emph{induced
bounded linear functional $L_f$} on $c(V)$ as following: for any
$x=\{x_n\}\in c(V)$ with $\lim_{n\rightarrow\infty}x_n=v\in V$,
$L_f(x)=f(v)$.
\end{definition}

\begin{prop}\label{norm control2}
For any $x=\{x_n\}_{n=1}^{\infty}\in c(V)$ with
$\lim_{n\rightarrow\infty}x_n=v\in V$, $|L_f(x)|\leq p(x)$.
\end{prop}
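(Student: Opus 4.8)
The plan is to chain together the estimates already established. Fix $x=\{x_n\}_{n=1}^{\infty}\in c(V)$ with $\lim_{n\to\infty}x_n=v$. By definition $L_f(x)=f(v)$, so $|L_f(x)|=|f(v)|\leq\|f\|\,\|v\|_V\leq\|v\|_V$, using the hypothesis $\|f\|\leq 1$. Thus it suffices to show $\|v\|_V\leq p(x)$, and in fact I expect equality $\|v\|_V=p(x)$, which is the content I would extract from the preceding corollary.

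First I would invoke Corollary \ref{jixian}: since $x\in c(V)$ with limit $v$, we have
$$\lim_{n\to\infty}\frac{1}{n}\left\|\sum_{i=0}^{n-1}x_{i+j}\right\|_V=\|v\|_V$$
uniformly in $j$. Next I would feed this into Lemma \ref{xiangdeng} with $m=\|v\|_V$: the uniform-in-$j$ convergence of the averaged norms to $\|v\|_V$ forces
$$p(x)=\lim_{n\to\infty}\left(\sup_j\frac{1}{n}\left\|\sum_{i=0}^{n-1}x_{i+j}\right\|_V\right)=\|v\|_V.$$
Combining with the first paragraph gives $|L_f(x)|\leq\|v\|_V=p(x)$, which is the claim.

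There is really no serious obstacle here — the proposition is a bookkeeping consequence of Corollary \ref{jixian} and Lemma \ref{xiangdeng} together with the operator-norm bound on $f$. The only point that needs a word of care is making sure the hypotheses of the two cited results are met: Corollary \ref{jixian} requires $x\in c(V)$, which is exactly our standing assumption, and Lemma \ref{xiangdeng} requires uniform convergence of $\frac{1}{n}\|\sum_{i=0}^{n-1}x_{i+j}\|_V$ to some constant $m$, which Corollary \ref{jixian} supplies with $m=\|v\|_V$. So the write-up is essentially three lines: the Hahn–Banach-type norm estimate $|f(v)|\leq\|v\|_V$, the identification $p(x)=\|v\|_V$ via the two lemmas, and the conclusion.
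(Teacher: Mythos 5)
Your argument is correct and matches the paper's proof exactly: both bound $|L_f(x)|=|f(v)|\leq\|v\|_V$ using $\|f\|\leq 1$, then identify $\|v\|_V=p(x)$ by combining Corollary \ref{jixian} with Lemma \ref{xiangdeng}. No differences worth noting.
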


\begin{proof}
From Lemma \ref{xiangdeng} and Corollary \ref{jixian}, we have
$$
|L_f(x)|=|f(v)|\leq\|v\|_V=\lim_{n\rightarrow\infty}\frac{1}{n}\left\|\sum_{i=0}^{n-1}x_{i+j}\right\|_V
=\lim_{n\rightarrow\infty}\left(\sup_j\frac{1}{n}\left\|\sum_{i=0}^{n-1}x_{i+j}\right\|_V\right)=p(x).
$$
\end{proof}

\begin{cor}
$\|L_f\|\leq1$.
\end{cor}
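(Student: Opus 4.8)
The plan is to deduce this immediately from Proposition \ref{norm control2} together with the trivial observation that the seminorm $p$ is dominated by the supremum norm. Recall that $\|L_f\|$ denotes the operator norm of $L_f$ as a bounded linear functional on $c(V)$, i.e. $\|L_f\|=\sup\{|L_f(x)| : x\in c(V),\ \|x\|_\infty\leq 1\}$, so it suffices to show $|L_f(x)|\leq 1$ whenever $\|x\|_\infty\leq 1$.

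First I would note that for every $x=\{x_n\}_{n=1}^\infty\in l^\infty(V)$ one has $p(x)\leq\|x\|_\infty$. Indeed, for each $n$ and each $j$,
$$
\frac{1}{n}\left\|\sum_{i=0}^{n-1}x_{i+j}\right\|_V\leq\frac{1}{n}\sum_{i=0}^{n-1}\|x_{i+j}\|_V\leq\frac{1}{n}\cdot n\cdot\|x\|_\infty=\|x\|_\infty,
$$
so taking the supremum over $j$ and then the limit in $n$ (which exists by Lemma \ref{q is well-defined}) gives $p(x)\leq\|x\|_\infty$. Then for $x\in c(V)$ with $\|x\|_\infty\leq 1$, Proposition \ref{norm control2} yields $|L_f(x)|\leq p(x)\leq\|x\|_\infty\leq 1$, and hence $\|L_f\|\leq 1$.

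There is essentially no obstacle here: the content has already been absorbed into Proposition \ref{norm control2} and Lemma \ref{q is well-defined}, and the only extra input is the elementary bound $p(x)\leq\|x\|_\infty$ above. (Alternatively, one could bypass $p$ entirely and argue directly that $|L_f(x)|=|f(v)|\leq\|f\|\,\|v\|_V\leq\|v\|_V=\lim_n\|x_n\|_V\leq\|x\|_\infty$, but routing through the proposition keeps the exposition uniform with what follows.)
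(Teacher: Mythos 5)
Your proof is correct and follows essentially the same route as the paper: it invokes Proposition \ref{norm control2} to get $|L_f(x)|\leq p(x)$ and then uses the bound $p(x)\leq\|x\|_\infty$ (which the paper asserts without the explicit triangle-inequality computation you supply). No issues.
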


\begin{proof}
$\forall x=\{x_n\}_{n=1}^{\infty}\in c(V)$, from Proposition
\ref{norm control2}, $|L_f(x)|\leq p(x)\leq \|x\|_{\infty}$. So
$\|L_f\|\leq1$.
\end{proof}

From Hahn-Banach Theorem, we know that there must exist a
norm-preserving extension $\overline{L}_f$ of $L_f$ on whole
$l^{\infty}(V)$ such that
\begin{equation}\label{norm control}
|\overline{L}_f(x)|\leq p(x),
\end{equation}
$\forall x=\{x_n\}_{n=1}^{\infty}\in l^{\infty}(V)$. Now we will
show that such $\overline{L}_f$ is an example of Banach limit
functional as defined in Definition \ref{Banach limit}.

\begin{theorem}\label{shift invariant}
If $L\in l^{\infty}(V)^*$ and $x=\{x_n\}_{n=1}^{\infty}\in
l^{\infty}(V)$ such that $|L(x)|\leq p(x)$, then $L(Tx)=L(x)$.
\end{theorem}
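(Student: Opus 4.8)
The plan is to reduce the whole statement to a single computation of the seminorm $p$ on the element $x-Tx$. By linearity of $L$ we have $L(x)-L(Tx)=L(x-Tx)$, and the hypothesis $|L(\cdot)|\le p(\cdot)$ (which I read as holding on all of $l^{\infty}(V)$, so in particular at the element $x-Tx\in l^{\infty}(V)$) gives $|L(x)-L(Tx)|=|L(x-Tx)|\le p(x-Tx)$. Hence it suffices to prove that $p(x-Tx)=0$, and the theorem follows at once.

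The key step is a telescoping identity. Since $(Tx)_n=x_{n+1}$, we have $(x-Tx)_m=x_m-x_{m+1}$, so for every $n$ and every $j$,
$$\sum_{i=0}^{n-1}(x-Tx)_{i+j}=\sum_{i=0}^{n-1}\bigl(x_{i+j}-x_{i+j+1}\bigr)=x_j-x_{j+n}.$$
Therefore
$$\frac{1}{n}\left\|\sum_{i=0}^{n-1}(x-Tx)_{i+j}\right\|_V=\frac{1}{n}\|x_j-x_{j+n}\|_V\le\frac{\|x_j\|_V+\|x_{j+n}\|_V}{n}\le\frac{2\|x\|_{\infty}}{n},$$
and the bound on the right is independent of $j$. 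Taking the supremum over $j$ and then the limit as $n\to\infty$ — which exists by Lemma \ref{q is well-defined} — we get
$$p(x-Tx)=\lim_{n\rightarrow\infty}\left(\sup_j\frac{1}{n}\left\|\sum_{i=0}^{n-1}(x-Tx)_{i+j}\right\|_V\right)\le\lim_{n\rightarrow\infty}\frac{2\|x\|_{\infty}}{n}=0,$$
so $p(x-Tx)=0$. Combining this with $|L(x)-L(Tx)|\le p(x-Tx)$ from the previous paragraph yields $L(Tx)=L(x)$.

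I do not expect any genuine obstacle: the entire argument is the telescoping estimate above together with the existence of the defining limit for $p$. The only subtlety worth flagging is the quantifier in the hypothesis — one must be able to apply the inequality $|L(\cdot)|\le p(\cdot)$ at $x-Tx$, not merely at $x$ itself — and the observation that $p$, being built from Cesàro-type averages, automatically kills the "coboundary" $x-Tx$ for any bounded $x$. Everything else is routine.
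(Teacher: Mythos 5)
Your proof is correct and follows essentially the same route as the paper: the paper sets $y=Tx-x$, telescopes the sum to $x_{n+j}-x_j$, bounds it by $2\|x\|_\infty/n$ to get $p(y)=0$, and concludes $L(y)=0$, which is your argument up to a sign. Your remark about the quantifier is apt — the paper likewise applies the inequality $|L(\cdot)|\le p(\cdot)$ at $y=Tx-x$ rather than only at $x$, so the hypothesis must indeed be read as holding on all of $l^{\infty}(V)$.
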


\begin{proof}
Define sequence $y:=\{y_n\}_{n=1}^{\infty}$ as $y_n:=x_{n+1}-x_n$,
i.e., $y=Tx-x$. Since $x$ is bounded, $y$ is also bounded, i.e.,
$y\in l^{\infty}(V)$. Then we have
\begin{align*}
p(y)=&\lim_{n\rightarrow\infty}\left(\sup_j\frac{1}{n}\left\|\sum_{i=0}^{n-1}(x_{i+j+1}-x_{i+j})\right\|_V\right)\\
=&\lim_{n\rightarrow\infty}\left(\sup_j\frac{1}{n}\left\|x_{n+j}-x_j)\right\|_V\right)\\
\leq&\lim_{n\rightarrow\infty}\frac{2\|x\|_{\infty}}{n}=0.
\end{align*}
Since $ |L(y)|\leq p(y)=0,$ i.e., $L(y)=0$, we have
$$L(y)=L(Tx-x)=L(Tx)-L(x)=0,$$ i.e., $L(Tx)=L(x)$.
\end{proof}

So far, we have shown that $\overline{L}_f$ is indeed a Banach limit
functional. Since that $f$ is an arbitrary choice from $B_1(V^*)$
and Hahn-Banach norm-preserving extension is not unique, we can see
that $l^{\infty}(V)$ has sufficiently many Banach limit functionals.
Let us denote all the Banach limit functionals of $l^{\infty}(V)$ by
$\mathfrak{L}(V)$.

\begin{remark}
Our definition of Banach limit functional here has greatly improved
and simplified corresponding definition in D. Hajdukovi\'{c}'s paper
\cite{Hajdukovic}. First of all, you will find that we don't confine
$V$ to be only real normed vector space. Actually, since there is no
longer positive element in normed vector space, we don't need real
scalars. And we also improve the definition of $p(x)$ from
$\varlimsup_{n\rightarrow\infty}\left(\sup_j\frac{1}{n}\left\|\sum_{i=0}^{n-1}x_{i+j}\right\|_V\right)
$ to
$\lim_{n\rightarrow\infty}\left(\sup_j\frac{1}{n}\left\|\sum_{i=0}^{n-1}x_{i+j}\right\|_V\right)$,
which is more accurate. Moreover, due to the following theorem, we
will show that suppose $L\in l^{\infty}(V)^*$ and $\|L\|\leq 1$, for
any $x=\{x_n\}_{n=1}^{\infty}\in l^{\infty}(V)$,
$L(Tx)=L(x)\Longleftrightarrow |L(x)|\leq p(x)$. Hence we exclude
the condition $|L(x)|\leq p(x)$ from Definition \ref{Banach limit}.
\end{remark}

\begin{theorem}\label{equiv}
Suppose $L\in l^{\infty}(V)^*$, the following two statements are
equivalent:

(i)L is a Banach limit functional;

(ii)$|L(x)|\leq p(x)$, $\forall x=\{x_n\}_{n=1}^{\infty}\in
l^{\infty}(V)$.
\end{theorem}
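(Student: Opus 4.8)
The plan is to prove the two implications separately; the implication (i)$\Rightarrow$(ii) is the one that needs a genuine idea (a Ces\`aro averaging trick), while (ii)$\Rightarrow$(i) is essentially already in hand.

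\medskip
\noindent\emph{Proof of (ii)$\Rightarrow$(i).} First note that $p$ is dominated by the supremum norm: for every $n$ and $j$,
$$\frac{1}{n}\left\|\sum_{i=0}^{n-1}x_{i+j}\right\|_V\le\frac{1}{n}\sum_{i=0}^{n-1}\|x_{i+j}\|_V\le\|x\|_{\infty},$$
so taking the supremum over $j$ and then the limit in $n$ gives $p(x)\le\|x\|_{\infty}$. Hence, if $|L(x)|\le p(x)$ for all $x\in l^{\infty}(V)$, then $|L(x)|\le\|x\|_{\infty}$, i.e.\ $\|L\|\le1$; and Theorem \ref{shift invariant} applies verbatim to give $L(Tx)=L(x)$ for all $x$. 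Thus $L$ is a Banach limit functional.

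\medskip
\noindent\emph{Proof of (i)$\Rightarrow$(ii).} Assume $\|L\|\le1$ and $L(Tx)=L(x)$ for every $x\in l^{\infty}(V)$. Iterating the shift-invariance gives $L(T^ix)=L(x)$ for all $i\ge0$, so by linearity, for every $n\ge1$,
$$L(x)=\frac{1}{n}\sum_{i=0}^{n-1}L(T^ix)=L\!\left(\frac{1}{n}\sum_{i=0}^{n-1}T^ix\right).$$
Write $z^{(n)}:=\frac{1}{n}\sum_{i=0}^{n-1}T^ix$; since each $T^ix\in l^{\infty}(V)$ with $\|T^ix\|_{\infty}\le\|x\|_{\infty}$, we have $z^{(n)}\in l^{\infty}(V)$. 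Because $(T^ix)_j=x_{i+j}$, the $j$-th term of $z^{(n)}$ is $z^{(n)}_j=\frac{1}{n}\sum_{i=0}^{n-1}x_{i+j}$, so $\|z^{(n)}\|_{\infty}=\sup_j\frac{1}{n}\left\|\sum_{i=0}^{n-1}x_{i+j}\right\|_V$. Applying $\|L\|\le1$ to $z^{(n)}$ yields
$$|L(x)|=|L(z^{(n)})|\le\|z^{(n)}\|_{\infty}=\sup_j\frac{1}{n}\left\|\sum_{i=0}^{n-1}x_{i+j}\right\|_V$$
for every $n$. Letting $n\to\infty$ and using Lemma \ref{q is well-defined} (the limit defining $p$ exists), we conclude $|L(x)|\le p(x)$, as desired.

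\medskip
There is no real obstacle here; the only point requiring care is the index bookkeeping in $T^ix$ and, more substantively, the observation that $\|L\|\le1$ must be applied to the Ces\`aro average $z^{(n)}$ rather than to $x$ itself — it is precisely this replacement of $x$ by its shifted averages (legitimate thanks to shift-invariance) that upgrades the crude bound $|L(x)|\le\|x\|_{\infty}$ to the sharp bound $|L(x)|\le p(x)$.
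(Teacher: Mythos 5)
Your proof is correct and follows essentially the same route as the paper: the (i)$\Rightarrow$(ii) direction uses exactly the paper's Ces\`aro-average sequence (your $z^{(n)}$ is the paper's $c_n$), and the (ii)$\Rightarrow$(i) direction invokes Theorem \ref{shift invariant}. If anything, you are slightly more complete than the paper, which dismisses (ii)$\Rightarrow$(i) as "exactly Theorem \ref{shift invariant}" without explicitly noting, as you do, that $\|L\|\le 1$ follows from $|L(x)|\le p(x)\le\|x\|_{\infty}$.
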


\begin{proof}
(ii)$\Longrightarrow$(i) is exactly Theorem \ref{shift invariant}.

For (i)$\Longrightarrow$(ii), let $c_n=\{c_{n,j}\}_{j=1}^{\infty}\in
l^{\infty}(V)$, where $c_{n,j}=\frac{1}{n}\sum_{i=0}^{n-1}x_{i+j}$,
i.e., $c_n=\frac{1}{n}\sum_{i=0}^{n-1}T^ix$. Then for any Banach
limit functional $L$, we have
$$\sup_j\frac{1}{n}\left\|\sum_{i=0}^{n-1}x_{i+j}\right\|_V=\|c_n\|_{\infty}\geq|L(c_n)|
=|L(\frac{1}{n}\sum_{i=0}^{n-1}T^ix)|=\frac{1}{n}|\sum_{i=0}^{n-1}L(T^ix)|=|L(x)|.$$
Hence $|L(x)|\leq
\lim_{n\rightarrow\infty}\left(\sup_j\frac{1}{n}\left\|\sum_{i=0}^{n-1}x_{i+j}\right\|_V\right)=p(x)$.
\end{proof}

Classical Banach limit on bounded real sequences is a generalization
of ordinary convergence, so item (iv) is always included in the
definition of Banach limit. From our view point of Banach limit
functional here, Banach limit actually is the Banach limit
functional induced from linear functional $f(x)=x$, $\forall x\in
\mathbb{R}$. Moreover, the following proposition shows that in some
sense item (iv) can be implied from item (ii) and (iii). Hence, our
definition of Banach limit functional is essentially a
simplification of Banach limit.

\begin{prop}\label{generalize}
If $L\in \mathfrak{L}(V)$ and $x=\{x_n\}^{\infty}_{n=1}\in c(V)$
with $\lim_{n\rightarrow\infty}x_n=v\in V$, then
$L(x)=L(\widetilde{v})$.
\end{prop}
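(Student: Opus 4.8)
The plan is to exploit Theorem \ref{equiv}: since $L\in\mathfrak{L}(V)$ is a Banach limit functional, we have $|L(y)|\le p(y)$ for every $y\in l^{\infty}(V)$. Apply this with $y:=x-\widetilde{v}$, which lies in $c(V)$ since $x\in c(V)$ and $\lim_n x_n=v$; indeed $\lim_n(x_n-v)=0$. By Lemma \ref{zero} applied to the null sequence $x-\widetilde{v}$,
$$\lim_{n\rightarrow\infty}\frac{1}{n}\left\|\sum_{i=0}^{n-1}(x_{i+j}-v)\right\|_V=0$$
uniformly in $j$, and then Lemma \ref{xiangdeng} (with $m=0$) gives $p(x-\widetilde{v})=0$. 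Therefore $|L(x-\widetilde{v})|\le p(x-\widetilde{v})=0$, so $L(x-\widetilde{v})=0$.

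It remains to pass from $L(x-\widetilde{v})=0$ to $L(x)=L(\widetilde{v})$, which is immediate from linearity of $L$: $0=L(x-\widetilde{v})=L(x)-L(\widetilde{v})$, hence $L(x)=L(\widetilde{v})$.

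I do not expect any serious obstacle here; the only point requiring a little care is confirming that the earlier lemmas apply verbatim to the null sequence $x-\widetilde{v}$ rather than to $x$ itself, and that Theorem \ref{equiv} has already established the inequality $|L(y)|\le p(y)$ for all of $l^{\infty}(V)$ (not merely on $c(V)$), so that we are entitled to feed it the sequence $x-\widetilde{v}$. Both are in place, so the argument is a short assembly of Lemma \ref{zero}, Lemma \ref{xiangdeng}, Theorem \ref{equiv}, and linearity.
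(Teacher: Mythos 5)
Your proof is correct and follows exactly the paper's own argument: Lemma \ref{zero} plus Lemma \ref{xiangdeng} give $p(x-\widetilde{v})=0$, then Theorem \ref{equiv} and linearity yield $L(x)=L(\widetilde{v})$. Your added care in spelling out that Theorem \ref{equiv} applies on all of $l^{\infty}(V)$ and that the lemmas are applied to the null sequence $x-\widetilde{v}$ is welcome but changes nothing substantive.
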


\begin{proof}
Since $\lim_{n\rightarrow\infty}(x_n-v)=0$, it follows from Lemma
\ref{xiangdeng} and Lemma \ref{zero} that $p(x-\widetilde{v})=0$.
From Theorem \ref{equiv}, $|L(x-\widetilde{v})|\leq
p(x-\widetilde{v})=0$, i.e., $L(x)=L(\widetilde{v})$.
\end{proof}

\begin{remark}
Before finishing this section, we remark that in the definition of
classical Banach limit of bounded real sequences, item
(i)(positivity) could be implied from item (iii) and (iv), so this
item could be excluded. Moreover, due to Proposition
\ref{generalize}, item (iv) could be replaced by (iv')
$L(\widetilde{1})=1$. We leave the proofs as easy exercises to
interested readers.
\end{remark}

\section{Strong Almost Convergence of Bounded Sequences in Normed Vector
Space}\label{Strong almost convergence}

\begin{definition}
A sequence $x=\{x_n\}_{n=1}^{\infty}\in l^{\infty}(V)$ is called
\emph{strongly almost convergent} to $v\in V$ if for any Banach
limit functional $L\in \mathfrak{L}(V)$, it holds that
$L(x)=L(\widetilde{v})$. Let us denote it by
$x_n\overset{s.a.}{\longrightarrow} v$, and $v$ is called
\emph{strong almost limit} of $x$.
\end{definition}

Next we will give an equivalent characterization of strong almost
convergence, and show that our strong almost convergence is
equivalent to almost convergence given by
Hajdukovi\'{c}\cite{Hajdukovic}. Moreover, as an immediate
corollary, his quasi-almost convergence is equivalent too.

\begin{lemma}\label{vanish}
Suppose $x=\{x_n\}_{n=1}^{\infty}\in l^{\infty}(V)$. $p(x)=0$ if and
only if $L(x)=0$, $\forall L\in \mathfrak{L}(V)$.
\end{lemma}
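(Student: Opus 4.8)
The plan is to prove both directions using Theorem \ref{equiv}, which identifies Banach limit functionals with those $L \in l^\infty(V)^*$ satisfying $|L(x)| \le p(x)$. The forward direction is nearly immediate: if $p(x) = 0$, then for any $L \in \mathfrak{L}(V)$ we have $|L(x)| \le p(x) = 0$ by Theorem \ref{equiv}, hence $L(x) = 0$.

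For the reverse direction, I would argue by contraposition: assuming $p(x) > 0$, I will construct a Banach limit functional $L$ with $L(x) \ne 0$. The key is to exploit the construction of Banach limit functionals already developed in Section \ref{Banach limit functional}, namely the extensions $\overline{L}_f$ of induced functionals $L_f$. However, that construction started from $c(V)$, so I first need to produce the right seed. The natural choice is to use the averaged sequences $c_n = \frac{1}{n}\sum_{i=0}^{n-1} T^i x$ that already appeared in the proof of Theorem \ref{equiv}. Concretely, since $p(x) = \lim_n \left(\sup_j \frac{1}{n}\|\sum_{i=0}^{n-1} x_{i+j}\|_V\right) = \lim_n \|c_n\|_\infty > 0$, one can choose, for suitable large $n$ and suitable index $j_n$, vectors $u_n := c_{n,j_n} = \frac{1}{n}\sum_{i=0}^{n-1} x_{i+j_n} \in V$ with $\|u_n\|_V$ bounded below by, say, $p(x)/2$. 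By Hahn-Banach on $V$, pick $f_n \in B_1(V^*)$ with $f_n(u_n) = \|u_n\|_V$. The remaining task is to assemble these into a single functional on $l^\infty(V)$ that sees a nonzero value on $x$ while respecting the bound $|L(x)| \le p(x)$; this is done by taking a Banach-limit-type (ultrafilter, or iterated) limit over $n$ of the linear functionals $x \mapsto f_n\!\left(\frac{1}{n}\sum_{i=0}^{n-1} x_{i+j_n}\right)$ on $l^\infty(V)$, each of which has norm $\le 1$, and then invoking Theorem \ref{equiv} to conclude the resulting limit functional is a genuine Banach limit functional. Alternatively, and perhaps more cleanly, one can define on the subspace spanned by shifts of $x$ plus $c(V)$ a functional that is dominated by $p$ and nonzero on $x$, then Hahn-Banach extend dominated by $p$, as in \eqref{norm control}; Theorem \ref{equiv} again gives that this extension lies in $\mathfrak{L}(V)$.

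The main obstacle is the reverse direction: producing a Banach limit functional that is simultaneously shift-invariant, $p$-dominated, and nonzero at $x$. The subtlety is that one cannot simply pick a single $f \in B_1(V^*)$ and evaluate, since $x \notin c(V)$ and there is no canonical limit vector; the workaround is to pass to a generalized limit over the averaging index $n$ (using a free ultrafilter or a classical Banach limit on the scalar sequences $f_n(c_{n,j_n})$), which is exactly the device that keeps the bound $|L(\cdot)| \le p(\cdot)$ intact because $|f_n(c_{n,j})| \le \|c_n\|_\infty$ for every $j$ and $\|c_n\|_\infty \to p(x)$. Once the dominated functional is in hand, Theorem \ref{equiv} does the rest, so no further shift-invariance check is needed.
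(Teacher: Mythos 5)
Your forward direction coincides with the paper's: apply Theorem \ref{equiv} and read off $|L(x)|\le p(x)=0$. For the converse, the paper's argument is noticeably shorter than either of your routes: it takes the one-dimensional subspace $M=\{\lambda x:\lambda\in\mathbb{C}\}$, defines $f_0(\lambda x)=\lambda p(x)$ (so that $|f_0|=p$ on $M$ holds automatically, with no choices to make), and applies the seminorm-dominated Hahn--Banach theorem to obtain $L_0$ on all of $l^{\infty}(V)$ with $|L_0(y)|\le p(y)$ and $L_0(x)=p(x)$; Theorem \ref{equiv} then places $L_0$ in $\mathfrak{L}(V)$, exactly as you invoke it. Your primary route --- norming functionals $f_n$ at near-extremal averages $u_n=c_{n,j_n}$, assembled by an ultrafilter limit --- is also correct: each $\Lambda_n(y)=f_n\bigl(\frac{1}{n}\sum_{i=0}^{n-1}y_{i+j_n}\bigr)$ satisfies $|\Lambda_n(y)|\le\sup_j\frac{1}{n}\bigl\|\sum_{i=0}^{n-1}y_{i+j}\bigr\|_V$, so the limit functional is $p$-dominated, hence in $\mathfrak{L}(V)$, and it takes a value at least $p(x)/2>0$ at $x$. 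What this buys is a somewhat more concrete functional, at the cost of Banach--Alaoglu/ultrafilter machinery and of achieving only $L(x)\ge p(x)/2$ rather than $L(x)=p(x)$ exactly. Your second alternative is essentially the paper's idea, but the subspace you propose (shifts of $x$ together with $c(V)$) is larger than necessary and you do not actually exhibit the $p$-dominated functional on it --- that consistency check is where the work would lie; the span of $x$ alone, with the definition $f_0(\lambda x)=\lambda p(x)$, makes the domination trivial and is all that is needed.
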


\begin{proof}
If $p(x)=0$, then $\forall L\in \mathfrak{L}(V)$, it follows from
Theorem \ref{equiv} that $|L(x)|\leq p(x)=0$. Hence $L(x)=0$.

Conversely. Since $\forall L\in \mathfrak{L}(V)$ $L(x)=0$, it
suffices to find a particular Banach limit functional $L_0$ such
that $L_0(x)=p(x)$. The following is the construction of such $L_0$.
Let $M=\{\lambda x: \lambda\in \mathbb{C}\}$ be a subspace of
$l^{\infty}(V)$. On $M$ define $f_0(\lambda x)=\lambda p(x)$, then
$|f_0(y)|=p(y)$ $\forall y\in M$. From Hahn-Banach Theorem, we can
get an extension $L_0$ of $f_0$ on whole $l^{\infty}(V)$ such that
$|L_0(y)|\leq p(y)$ $\forall y\in l^{\infty}(V)$. From Theorem
\ref{equiv}, we know that $L_0$ is a Banach limit functional. So we
are done.
\end{proof}

\begin{remark}
In fact, so far all statements concerning $p$ in Section \ref{Banach
limit functional} and Section \ref{Strong almost convergence} still
hold for $q$, so we can see that quasi-almost convergence given by
Hajdukovi\'{c} is actually equivalent to strong almost convergence.
\end{remark}

An immediate corollary of Lemma \ref{vanish} is the following
important theorem:
\begin{theorem}\label{s.a.c.}
A sequence $x=\{x_n\}_{n=1}^{\infty}\in l^{\infty}(V)$ is strongly
almost convergent to $v\in V$ if and only if $p(x-\widetilde{v})=0$.
\end{theorem}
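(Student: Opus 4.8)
The plan is to derive this theorem as a direct consequence of Lemma~\ref{vanish} applied to the sequence $x - \widetilde{v}$. First I would observe that strong almost convergence of $x$ to $v$ means, by definition, that $L(x) = L(\widetilde{v})$ for every $L \in \mathfrak{L}(V)$; since each such $L$ is linear, this is equivalent to saying $L(x - \widetilde{v}) = 0$ for every $L \in \mathfrak{L}(V)$. Setting $z := x - \widetilde{v}$, which lies in $l^{\infty}(V)$ because both $x$ and the constant sequence $\widetilde{v}$ are bounded, the condition becomes exactly ``$L(z) = 0$ for all $L \in \mathfrak{L}(V)$.''

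Next I would invoke Lemma~\ref{vanish}, which states precisely that for $z \in l^{\infty}(V)$ one has $p(z) = 0$ if and only if $L(z) = 0$ for all $L \in \mathfrak{L}(V)$. Chaining the two equivalences gives: $x$ is strongly almost convergent to $v$ $\iff$ $L(x - \widetilde{v}) = 0$ for all $L \in \mathfrak{L}(V)$ $\iff$ $p(x - \widetilde{v}) = 0$, which is the claimed statement. The only genuinely nontrivial content sits inside Lemma~\ref{vanish} (and behind it, Theorem~\ref{equiv} and the Hahn--Banach construction of a Banach limit functional $L_0$ with $L_0(z) = p(z)$), all of which I may assume as already proved.

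Since the proof is so short, the ``main obstacle'' is really just bookkeeping: making sure the linearity step ($L(x) = L(\widetilde{v}) \Leftrightarrow L(x - \widetilde{v}) = 0$) is stated cleanly, and that $x - \widetilde{v} \in l^{\infty}(V)$ is noted so that Lemma~\ref{vanish} applies. I would write it as follows.

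\begin{proof}
Since every $L \in \mathfrak{L}(V)$ is linear, the condition $L(x) = L(\widetilde{v})$ holds for all $L \in \mathfrak{L}(V)$ if and only if $L(x - \widetilde{v}) = 0$ for all $L \in \mathfrak{L}(V)$. Because $x \in l^{\infty}(V)$ and $\widetilde{v} \in c(V) \subset l^{\infty}(V)$, we have $x - \widetilde{v} \in l^{\infty}(V)$, so Lemma~\ref{vanish} applies to $x - \widetilde{v}$ and yields that $L(x - \widetilde{v}) = 0$ for all $L \in \mathfrak{L}(V)$ if and only if $p(x - \widetilde{v}) = 0$. Combining these two equivalences, $x$ is strongly almost convergent to $v$ if and only if $p(x - \widetilde{v}) = 0$.
\end{proof}
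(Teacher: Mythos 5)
Your proof is correct and matches the paper exactly: the paper presents this theorem as an immediate corollary of Lemma~\ref{vanish}, obtained by rewriting $L(x)=L(\widetilde{v})$ as $L(x-\widetilde{v})=0$ via linearity and then applying the lemma to $x-\widetilde{v}$. Your write-up simply makes that one-line deduction explicit.
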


\begin{prop}\label{basic properties of s.a.}
(i) If $x=\{x_n\}_{n=1}^{\infty}\in l^{\infty}(V)$ is strongly
almost convergent in $V$, then its strong almost limit is unique.

(ii) Suppose $x=\{x_n\}_{n=1}^{\infty}, y=\{y_n\}_{n=1}^{\infty}\in
l^{\infty}(V)$. If $x_n\overset{s.a.}{\longrightarrow} u$ and
$y_n\overset{s.a.}{\longrightarrow}v$, then for any
$\lambda,\mu\in\mathbb{C}$, $\lambda x_n+\mu
y_n\overset{s.a.}{\longrightarrow}\lambda u+\mu v$.

(iii) If $\{x_n\}_{n=1}^{\infty}$ is a sequence from $V$ such that
$\lim_{n\rightarrow\infty}x_n=v\in V$, then
$x_n\overset{s.a.}{\longrightarrow} v$.
\end{prop}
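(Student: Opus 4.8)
The plan is to reduce all three parts to the seminorm criterion of Theorem~\ref{s.a.c.}: a sequence is strongly almost convergent to $v$ precisely when $p(x-\widetilde{v})=0$. Once this is in hand, parts (ii) and (iii) are essentially immediate from the facts that $p$ is a seminorm (so it is subadditive, absolutely homogeneous, and satisfies $p(-z)=p(z)$) and that $p$ restricted to constant sequences equals the norm (Corollary~\ref{jixian}).

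For (ii) I would observe that $(\lambda x+\mu y)-\widetilde{\lambda u+\mu v}=\lambda(x-\widetilde{u})+\mu(y-\widetilde{v})$ and apply the seminorm properties of $p$ to obtain $p\bigl((\lambda x+\mu y)-\widetilde{\lambda u+\mu v}\bigr)\le|\lambda|\,p(x-\widetilde{u})+|\mu|\,p(y-\widetilde{v})=0$ by hypothesis. Theorem~\ref{s.a.c.} then gives $\lambda x_n+\mu y_n\overset{s.a.}{\longrightarrow}\lambda u+\mu v$. For (iii), since $\lim_{n\to\infty}(x_n-v)=0$, Lemma~\ref{zero} gives $\lim_{n\to\infty}\frac1n\bigl\|\sum_{i=0}^{n-1}(x_{i+j}-v)\bigr\|_V=0$ uniformly in $j$, whence Lemma~\ref{xiangdeng} yields $p(x-\widetilde{v})=0$; Theorem~\ref{s.a.c.} finishes it. (Alternatively, Proposition~\ref{generalize} already asserts $L(x)=L(\widetilde{v})$ for every $L\in\mathfrak{L}(V)$, which is the definition of strong almost convergence to $v$.)

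For (i), suppose $x_n\overset{s.a.}{\longrightarrow}u$ and $x_n\overset{s.a.}{\longrightarrow}v$. Then $p(x-\widetilde{u})=p(x-\widetilde{v})=0$, so by subadditivity and $p(-z)=p(z)$ we get $p(\widetilde{u}-\widetilde{v})\le p(\widetilde{u}-x)+p(x-\widetilde{v})=0$. The one step with genuine content is the evaluation of $p$ on the constant sequence: since $\widetilde{u}-\widetilde{v}=\widetilde{u-v}$ is convergent with limit $u-v$, Corollary~\ref{jixian} gives $p(\widetilde{u-v})=\|u-v\|_V$. Hence $\|u-v\|_V=0$, i.e.\ $u=v$.

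I do not expect a serious obstacle here; the only point requiring a moment's care is recalling that $p$ collapses to the norm on constant sequences (Corollary~\ref{jixian}), which is exactly what converts the statement $p(\widetilde{u}-\widetilde{v})=0$ into $u=v$ in part (i). Everything else is a routine application of the seminorm inequalities together with Theorem~\ref{s.a.c.}.
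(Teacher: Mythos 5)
Your proposal is correct and follows essentially the same route as the paper: all three parts are reduced to the criterion $p(x-\widetilde{v})=0$ of Theorem~\ref{s.a.c.}, with the seminorm inequalities handling (i) and (ii), the identity $p(\widetilde{u-v})=\|u-v\|_V$ (via Corollary~\ref{jixian}) closing part (i), and Proposition~\ref{generalize} (equivalently Lemmas~\ref{zero} and~\ref{xiangdeng}) giving part (iii). Your write-up is in fact somewhat more explicit than the paper's, which states the key inequalities without elaboration.
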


\begin{proof}
(i) If $x_n\overset{s.a.}{\longrightarrow} v_1$ and
$x_n\overset{s.a.}{\longrightarrow} v_2$ simultaneously, then it
follows from Theorem \ref{s.a.c.} that $\|v_1-v_2\|_V=p(v_1-v_2)\leq
p(\widetilde{v_1}-x)+p(x-\widetilde{v_2})=0+0=0$. Hence $v_1=v_2$.

(ii) $p(\lambda x+\mu y-\lambda \widetilde{u}-\mu \widetilde{v})\leq
|\lambda| p(x-\widetilde{u})+|\mu|p(y-\widetilde{v})$.

(iii) From Theorem \ref{generalize}.
\end{proof}

\begin{remark}
Please notice that if $x_n\overset{s.a.}{\longrightarrow}v\in V$, it
doesn't mean that each subsequence of $x=\{x_n\}_{n=1}^{\infty}$ is
also strongly almost convergent, let alone strongly almost
convergent to the same vector. For example, consider bounded real
sequence $x=\{1,0,1,0,\ldots\}$. Then
$x_n\overset{s.a.}{\longrightarrow}1/2$. However,
$\lim_{k\rightarrow\infty}x_{2k-1}=1$, while
$\lim_{k\rightarrow\infty}x_{2k}=0$.
\end{remark}

\begin{lemma}\label{zero equaility}
Suppose $x=\{x_n\}_{n=1}^{\infty}\in l^{\infty}(V)$ and $p(x)=0$,
then
$$\lim_{n\rightarrow\infty}\frac{1}{n}\left\|\sum_{i=0}^{n-1}x_{i+j}\right\|_V=0$$
uniformly in $j$.
\end{lemma}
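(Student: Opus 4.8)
The plan is to unwind the definition of $p$ directly. Recall
$$p(x)=\lim_{n\rightarrow\infty}\left(\sup_j\frac{1}{n}\left\|\sum_{i=0}^{n-1}x_{i+j}\right\|_V\right),$$
and this limit exists by Lemma \ref{q is well-defined}. So the hypothesis $p(x)=0$ says precisely that the sequence of real numbers
$$c_n=\sup_j\frac{1}{n}\left\|\sum_{i=0}^{n-1}x_{i+j}\right\|_V$$
converges to $0$ as $n\to\infty$.

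Now for the conclusion, fix $\varepsilon>0$. Since $c_n\to 0$, there is $N\in\mathbb{N}$ such that $c_n<\varepsilon$ for all $n>N$. But for every $j\in\mathbb{N}$ we have
$$\frac{1}{n}\left\|\sum_{i=0}^{n-1}x_{i+j}\right\|_V\leq c_n<\varepsilon$$
whenever $n>N$. Since $N$ does not depend on $j$, this is exactly the statement that
$$\lim_{n\rightarrow\infty}\frac{1}{n}\left\|\sum_{i=0}^{n-1}x_{i+j}\right\|_V=0$$
uniformly in $j$, which is what we want.

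In short, this lemma is essentially just observing that $\sup_j (\cdot) \to 0$ forces the quantities inside the supremum to go to $0$ uniformly in $j$ — there is no real obstacle here. The only point worth being careful about is invoking Lemma \ref{q is well-defined} so that $p(x)$ really is the limit (not merely the limsup) of $c_n$; without that one would have to argue via $\liminf$ and it would not give the uniform conclusion. With the limit in hand, the argument is immediate and the main work has already been done in the earlier lemmas.
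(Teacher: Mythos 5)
Your proposal is correct and follows essentially the same argument as the paper: unwind the definition of $p(x)=0$ to get $c_n<\varepsilon$ for $n>N$ with $N$ independent of $j$, and observe that each term is dominated by the supremum. The remark about needing the limit (rather than just a limsup) is a reasonable aside, but the core argument matches the paper's proof exactly.
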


\begin{proof}
Since $p(x)=0$, for any $\varepsilon>0$, there exists $N\in
\mathbb{N}$ such that
$$\sup_j\frac{1}{n}\left\|\sum_{i=0}^{n-1}x_{i+j}\right\|_V<\varepsilon$$
when $n>N$. In other words, for any $j\in \mathbb{N}$, when $n>N$
$$\frac{1}{n}\left\|\sum_{i=0}^{n-1}x_{i+j}\right\|_V<\varepsilon.$$
So
$$\lim_{n\rightarrow\infty}\frac{1}{n}\left\|\sum_{i=0}^{n-1}x_{i+j}\right\|_V=0$$
uniformly in $j$.
\end{proof}

\begin{theorem}\label{uniformly averagely convergent}
Suppose $x=\{x_n\}_{n=1}^{\infty}\in l^{\infty}(V)$.
$x_n\overset{s.a.}{\longrightarrow}v\in V$ if and only if
$$\lim_{n\rightarrow\infty}\frac{1}{n}\sum_{i=0}^{n-1}x_{i+j}=v$$
uniformly in $j$.
\end{theorem}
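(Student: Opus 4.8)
The plan is to prove Theorem \ref{uniformly averagely convergent} by reducing it, via Theorem \ref{s.a.c.}, to a statement purely about the seminorm $p$, and then to translate the $p$-condition into the uniform Cesàro condition using the lemmas already established. Writing $z:=x-\widetilde{v}$, i.e. $z_n=x_n-v$, we have $z\in l^{\infty}(V)$, and the two sides of the desired equivalence become: (a) $x_n\overset{s.a.}{\longrightarrow}v$, which by Theorem \ref{s.a.c.} is equivalent to $p(z)=0$; and (b) $\lim_{n\to\infty}\frac1n\sum_{i=0}^{n-1}x_{i+j}=v$ uniformly in $j$, which (after subtracting $v$ inside the average and using $\frac1n\sum_{i=0}^{n-1}x_{i+j}-v=\frac1n\sum_{i=0}^{n-1}z_{i+j}$) is equivalent to $\lim_{n\to\infty}\frac1n\|\sum_{i=0}^{n-1}z_{i+j}\|_V=0$ uniformly in $j$. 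So the whole theorem amounts to the equivalence, for $z\in l^{\infty}(V)$: $p(z)=0$ if and only if $\frac1n\|\sum_{i=0}^{n-1}z_{i+j}\|_V\to 0$ uniformly in $j$.

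For the direction $p(z)=0\Rightarrow$ uniform Cesàro convergence of $z$, this is exactly Lemma \ref{zero equaility} applied to $z$; nothing more is needed. For the converse, suppose $\frac1n\|\sum_{i=0}^{n-1}z_{i+j}\|_V\to 0$ uniformly in $j$; then by Lemma \ref{xiangdeng} (with $m=0$) we get $p(z)=\lim_{n\to\infty}\bigl(\sup_j\frac1n\|\sum_{i=0}^{n-1}z_{i+j}\|_V\bigr)=0$. That closes the loop. The argument is therefore almost entirely bookkeeping: the substantive content has already been packaged into Lemma \ref{xiangdeng}, Lemma \ref{zero equaility}, and Theorem \ref{s.a.c.}.

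The only genuinely routine step that still needs a line of justification is the passage from "uniform convergence of the vector averages $\frac1n\sum_{i=0}^{n-1}x_{i+j}$ to $v$" to "uniform convergence of the scalar quantities $\frac1n\|\sum_{i=0}^{n-1}z_{i+j}\|_V$ to $0$", which follows from the elementary estimate
$$\left\|\frac1n\sum_{i=0}^{n-1}z_{i+j}\right\|_V=\left\|\frac1n\sum_{i=0}^{n-1}x_{i+j}-v\right\|_V,$$
so the two conditions are literally the same statement, not merely equivalent up to constants. I would state this identity explicitly, invoke Lemma \ref{zero equaility} for the forward implication and Lemma \ref{xiangdeng} together with Theorem \ref{s.a.c.} for the backward implication, and conclude. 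I do not anticipate any real obstacle here; if anything, the mild subtlety is simply making sure the quantifier "uniformly in $j$" is carried consistently through the substitution $x\mapsto x-\widetilde v$, which is immediate since the shift and the averaging commute with subtracting a constant sequence.
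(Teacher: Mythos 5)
Your proof is correct and takes essentially the same route as the paper: reduce via Theorem \ref{s.a.c.} to the condition $p(x-\widetilde{v})=0$ and then translate this into uniform Ces\`aro convergence using Lemma \ref{zero equaility}. If anything, you are slightly more careful than the paper, which leaves the converse direction (your appeal to Lemma \ref{xiangdeng} with $m=0$) implicit.
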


\begin{proof}
$x_n\overset{s.a.}{\longrightarrow}v\Longleftrightarrow
p(x-\widetilde{v})=0$. From Lemma \ref{zero equaility},
$$\lim_{n\rightarrow\infty}\frac{1}{n}\left\|\sum_{i=0}^{n-1}(x_{i+j}-v)\right\|_V
=\lim_{n\rightarrow\infty}\left\|\frac{1}{n}\sum_{i=0}^{n-1}x_{i+j}-v\right\|_V=0$$
uniformly in $j$, i.e.,
$$\lim_{n\rightarrow\infty}\frac{1}{n}\sum_{i=0}^{n-1}x_{i+j}=v$$
uniformly in $j$.
\end{proof}

This theorem shows that strong almost convergence is equivalent to
almost convergence in \cite{Hajdukovic}, and so is quasi-almost
convergence.

\begin{remark}
In the definition of strong almost convergence, we require
$x=\{x_n\}_{n=1}^{\infty}$ to be bounded. Actually this is not
constrained, because from
$$\lim_{n\rightarrow\infty}\frac{1}{n}\sum_{i=0}^{n-1}x_{i+j}=v$$
uniformly in $j$, we can easily imply that $\{x_n\}_{n=1}^{\infty}$
is bounded.
\end{remark}

\begin{cor}\label{convex}
Suppose $x=\{x_n\}_{n=1}^{\infty}\in l^{\infty}(V)$. If
$x_n\overset{s.a.}{\longrightarrow}v\in V$, then $v\in
\overline{co}\{x_n:n\in\mathbb{N}\}$.
\end{cor}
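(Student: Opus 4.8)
The plan is to reduce this immediately to the characterization in Theorem~\ref{uniformly averagely convergent}. Since $x_n\overset{s.a.}{\longrightarrow}v$, that theorem gives
$$\lim_{n\rightarrow\infty}\frac{1}{n}\sum_{i=0}^{n-1}x_{i+j}=v$$
uniformly in $j$; in particular the limit holds for the single fixed choice $j=1$, so that $\frac{1}{n}\sum_{i=1}^{n}x_i\to v$ in the norm of $V$ as $n\to\infty$. (The uniformity is not needed here; any fixed $j$ would do.)

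Next I would observe that for each $n$ the element $\frac{1}{n}\sum_{i=1}^{n}x_i$ is a convex combination of $x_1,\ldots,x_n$ with equal weights $1/n$, hence belongs to $co\{x_n:n\in\mathbb{N}\}$. Therefore $v$ is the limit of a sequence of points lying in $co\{x_n:n\in\mathbb{N}\}$, and since $\overline{co}\{x_n:n\in\mathbb{N}\}$ is by definition the closure of that convex hull, it contains all such limits. Thus $v\in\overline{co}\{x_n:n\in\mathbb{N}\}$, which is exactly the assertion.

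There is essentially no obstacle in this argument; the only thing to be careful about is not to overcomplicate it by trying to use the full strength of the uniform convergence in $j$, when a single Cesàro average already exhibits $v$ as a limit of convex combinations of the $x_n$. If one wanted a self-contained argument avoiding an explicit appeal to Theorem~\ref{uniformly averagely convergent}, one could instead start from $p(x-\widetilde{v})=0$ (Theorem~\ref{s.a.c.}) and apply Lemma~\ref{zero equaility} to get $\big\|\frac1n\sum_{i=0}^{n-1}x_{i+1}-v\big\|_V\to 0$, but invoking the already-proved characterization is cleanest.
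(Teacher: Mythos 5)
Your proof is correct and is exactly the argument the paper intends: the corollary is stated without proof immediately after Theorem~\ref{uniformly averagely convergent}, and the intended justification is precisely that a single Ces\`aro average $\frac{1}{n}\sum_{i=1}^{n}x_i$ lies in $co\{x_n:n\in\mathbb{N}\}$ and converges in norm to $v$. Your remark that the uniformity in $j$ is not needed is also accurate.
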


\begin{definition}
$V$ is a normed vector space and $A\subseteq V$. $A$ is called
\emph{s.a.-sequentially closed} if $\forall \{x_n\}_{n=1}^{\infty}$
from $A$ such that $x_n\overset{s.a.}{\longrightarrow}v\in V$, then
$v\in A$.
\end{definition}

\begin{theorem}
Suppose $V$ is a normed vector space and $A\subseteq V$ is convex.
$A$ is (norm) closed if and if $A$ is s.a.-sequentially closed. In
particular, a subspace of $V$ is (norm) closed if and if it is
s.a.-sequentially closed.
\end{theorem}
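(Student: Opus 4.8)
The plan is to prove the two implications of the equivalence separately, observing that only one of them actually uses convexity, and that neither requires more than results already in hand.

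First I would dispatch the easy direction: s.a.-sequential closedness implies norm closedness, with no convexity needed. Given a sequence $\{x_n\}_{n=1}^{\infty}$ from $A$ with $x_n\to v$ in norm, Proposition \ref{basic properties of s.a.}(iii) yields $x_n\overset{s.a.}{\longrightarrow}v$, and then the s.a.-sequential closedness of $A$ forces $v\in A$. Hence every norm-convergent sequence from $A$ has its limit in $A$, i.e.\ $A$ is norm closed.

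For the converse I would assume $A$ is convex and norm closed and take a sequence $\{x_n\}_{n=1}^{\infty}$ from $A$ with $x_n\overset{s.a.}{\longrightarrow}v\in V$. By Corollary \ref{convex}, $v\in\overline{co}\{x_n:n\in\mathbb{N}\}$. Since every $x_n$ lies in the convex set $A$, we get $co\{x_n:n\in\mathbb{N}\}\subseteq A$, and passing to norm closures together with $\overline{A}=A$ gives $\overline{co}\{x_n:n\in\mathbb{N}\}\subseteq A$. Therefore $v\in A$, so $A$ is s.a.-sequentially closed. The ``in particular'' clause is then immediate, since a linear subspace is in particular a convex subset, so the stated equivalence for subspaces is just the specialization of the general statement.

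The only substantive ingredient — the step I would flag as the crux — is Corollary \ref{convex}, the statement that a strong almost limit always lies in the closed convex hull of the sequence; the rest is bookkeeping with closures and convexity. If one wanted the argument to be self-contained, one would derive that corollary from Theorem \ref{uniformly averagely convergent}: each Ces\`aro-type average $\frac{1}{n}\sum_{i=0}^{n-1}x_{i+j}$ is a finite convex combination of the $x_k$'s, hence lies in $\overline{co}\{x_n:n\in\mathbb{N}\}$, and by that theorem these averages converge (in norm) to $v$, so $v$ lies in the same closed convex hull.
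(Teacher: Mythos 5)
Your proof is correct and follows essentially the same route as the paper's: the easy direction via Proposition \ref{basic properties of s.a.}(iii), and the converse via Corollary \ref{convex} combined with the convexity and norm closedness of $A$ to get $\overline{co}\{x_n:n\in\mathbb{N}\}\subseteq\overline{A}=A$. Your added sketch of why Corollary \ref{convex} holds (the averages $\frac{1}{n}\sum_{i=0}^{n-1}x_{i+j}$ are convex combinations converging to $v$) is a welcome supplement, since the paper states that corollary without proof, but it does not change the argument.
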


\begin{proof}
Suppose $A$ is s.a.-sequentially closed. If
$\{x_n\}_{n=1}^{\infty}\subseteq A$ and
$\lim_{n\rightarrow\infty}x_n=v\in V$. From Theorem \ref{basic
properties of s.a.} (iii), $x_n\overset{s.a.}{\longrightarrow}v\in
A$. Hence $A$ is (norm) closed.

Conversely, suppose $A$ is (norm) closed. If
$\{x_n\}_{n=1}^{\infty}\subseteq A$ and
$x_n\overset{s.a.}{\longrightarrow}v\in V$. From Corollary
\ref{convex}, $v\in \overline{co}\{x_n:n\in\mathbb{N}\}\subseteq
\overline{A}=A$. Hence $A$ is s.a.-sequentially closed.
\end{proof}

\begin{definition}
A bounded sequence $\{x_n\}_{n=1}^{\infty}$ of normed vector space
$V$ is called an \emph{s.a.-Cauchy sequence} if for any
$\varepsilon>0$ there exists $n\in \mathbb{N}$ such that for any
$j\in \mathbb{N}$ if $n,m>N$, then
$\left\|\frac{1}{n}\sum_{i=0}^{n-1}x_{i+j}-\frac{1}{m}\sum_{i=0}^{m-1}x_{i+j}\right\|_V<\varepsilon$.
$V$ is called \emph{s.a.-complete} if every s.a.-Cauchy sequence in
$V$ is strongly almost convergent to a vector in $V$.
\end{definition}

\begin{cor}
A normed vector space $V$ is (norm) complete if and only if it is
s.a.-complete.
\end{cor}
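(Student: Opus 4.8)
The plan is to prove the two implications separately, letting the characterization of strong almost convergence by uniform Ces\`aro averages (Theorem~\ref{uniformly averagely convergent}) carry most of the weight. Throughout I write $c_{n,j}:=\frac1n\sum_{i=0}^{n-1}x_{i+j}$, so that $\{x_n\}$ being s.a.-Cauchy means exactly that $\{c_{n,j}\}_n$ is Cauchy uniformly in $j$, while $x_n\overset{s.a.}{\longrightarrow}v$ means exactly that $c_{n,j}\to v$ uniformly in $j$. Both kinds of Cauchy sequence under consideration are bounded (for norm-Cauchy sequences automatically), so I may fix $K$ with $\|x_n\|_V\le K$ for all $n$.

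\emph{Norm complete $\Rightarrow$ s.a.-complete.} Let $\{x_n\}$ be s.a.-Cauchy. Plugging a fixed $j$ into the definition shows $\{c_{n,j}\}_n$ is Cauchy in $V$, hence converges to some $v_j\in V$ by completeness; letting $m\to\infty$ in the (uniform in $j$) estimate $\|c_{n,j}-c_{m,j}\|_V<\varepsilon$ then upgrades this to $c_{n,j}\to v_j$ \emph{uniformly} in $j$. To see the $v_j$ all coincide, note $n(c_{n,j+1}-c_{n,j})=x_{n+j}-x_j$, so $\|c_{n,j+1}-c_{n,j}\|_V\le 2K/n\to 0$ and therefore $v_{j+1}=v_j$ for every $j$; call the common value $v$. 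Now $c_{n,j}\to v$ uniformly in $j$, so Theorem~\ref{uniformly averagely convergent} gives $x_n\overset{s.a.}{\longrightarrow}v\in V$, i.e. $V$ is s.a.-complete.

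\emph{s.a.-complete $\Rightarrow$ norm complete.} Let $\{x_n\}$ be norm-Cauchy, hence bounded by $K$. First I check it is s.a.-Cauchy: given $\varepsilon>0$ choose $M$ with $\|x_p-x_q\|_V<\varepsilon/4$ for all $p,q\ge M$; splitting the average $c_{n,j}-x_M=\frac1n\sum_{i=0}^{n-1}(x_{i+j}-x_M)$ at the index where $i+j=M$ and bounding the at most $M$ initial terms by $2K$, one gets $\|c_{n,j}-x_M\|_V\le 2MK/n+\varepsilon/4$, which is $<\varepsilon/2$ as soon as $n$ exceeds a threshold \emph{not depending on $j$}. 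Hence $\{c_{n,j}\}_n$ is Cauchy uniformly in $j$, so by s.a.-completeness $x_n\overset{s.a.}{\longrightarrow}v$ for some $v\in V$. It remains to see this $v$ is the norm limit: by Theorem~\ref{uniformly averagely convergent}, $c_{n,1}\to v$, and letting $n\to\infty$ in $\|c_{n,1}-x_M\|_V<\varepsilon/2$ yields $\|v-x_M\|_V\le\varepsilon/2$; thus $\|x_q-v\|_V\le\|x_q-x_M\|_V+\|x_M-v\|_V<\varepsilon$ for every $q\ge M$, i.e. $x_n\to v$ in norm.

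The only genuinely nontrivial point is the final step of the second implication. Since strong almost convergence is strictly weaker than norm convergence (as the example in the remark after Proposition~\ref{basic properties of s.a.} shows), extracting the s.a.-limit $v$ from s.a.-completeness is not by itself sufficient; one must feed the Cauchy property back in to recognize that this same $v$ is the norm limit. Everything else is routine quantifier bookkeeping in the definition of s.a.-Cauchy sequence. (An alternative is to pass to the completion $\widehat V$: an s.a.-Cauchy sequence in $V$ is s.a.-convergent in $\widehat V$ by the first implication, and uniqueness of the s.a.-limit together with Proposition~\ref{basic properties of s.a.}(iii) identifies that s.a.-limit with the norm limit; but the direct argument above is just as short and never leaves $V$.)
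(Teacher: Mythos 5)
The paper states this corollary with no proof at all, so there is nothing to compare against line by line; your argument is correct and complete, and it is exactly the argument the paper's machinery is set up to deliver, namely Theorem~\ref{uniformly averagely convergent} plus routine uniform estimates on the Ces\`aro averages $c_{n,j}$. Both directions check out: the identification $n(c_{n,j+1}-c_{n,j})=x_{n+j}-x_j$ correctly forces a single limit $v$ in the first implication, and in the second you rightly recognize that s.a.-completeness alone only hands you an s.a.-limit, so the norm-Cauchy property must be fed back in (via $\|c_{n,1}-x_M\|_V<\varepsilon/2$) to show this $v$ is also the norm limit.
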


\begin{remark}
This shows that though strong almost convergence is weaker than
(norm) convergence, considering completion, it doesn't enlarge the
space further.
\end{remark}

In the end, we will explain why we use the terminology strong almost
convergence.
\begin{definition}[J. Kurtz\cite{Kurtz}]
Suppose $x=\{x_n\}_{n=1}^{\infty}\in l^{\infty}(V)$. We say that
$x=\{x_n\}_{n=1}^{\infty}$ is \emph{weakly almost convergent} to
$v\in V$ if for any $f\in V^*$,
$\widehat{f}(x):=\{f(x_n)\}_{n=1}^{\infty}\in
l^{\infty}(\mathbb{C})$ is almost convergent to $f(v)$. Let us
denote it by $x_n\overset{w.a.}{\longrightarrow}v$.
\end{definition}

\begin{remark}
From the definition, it is immediate that any weakly convergent
sequence is weakly almost convergent to its weak limit.
\end{remark}

\begin{theorem}
Suppose $x=\{x_n\}_{n=1}^{\infty}\in l^{\infty}(V)$ and $v\in V$. If
$x_n\overset{s.a.}{\longrightarrow}v$, then
$x_n\overset{w.a.}{\longrightarrow} v$.
\end{theorem}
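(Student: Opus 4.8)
The plan is to reduce both notions of convergence to their Cesàro-average characterizations and then move the functional $f$ across the average by linearity and continuity. First I would invoke Theorem \ref{uniformly averagely convergent}: the hypothesis $x_n\overset{s.a.}{\longrightarrow}v$ is equivalent to $\lim_{n\rightarrow\infty}\frac{1}{n}\sum_{i=0}^{n-1}x_{i+j}=v$ uniformly in $j$, i.e., $\left\|\frac{1}{n}\sum_{i=0}^{n-1}x_{i+j}-v\right\|_V\rightarrow 0$ uniformly in $j$.

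Next, fix $f\in V^*$; we may assume $f\neq 0$, since the case $f=0$ is trivial. By linearity of $f$ one has $\frac{1}{n}\sum_{i=0}^{n-1}f(x_{i+j})=f\left(\frac{1}{n}\sum_{i=0}^{n-1}x_{i+j}\right)$, so that by boundedness of $f$,
$$\left|\frac{1}{n}\sum_{i=0}^{n-1}f(x_{i+j})-f(v)\right|=\left|f\left(\frac{1}{n}\sum_{i=0}^{n-1}x_{i+j}-v\right)\right|\leq\|f\|\left\|\frac{1}{n}\sum_{i=0}^{n-1}x_{i+j}-v\right\|_V.$$
Since the last quantity tends to $0$ uniformly in $j$, so does the left-hand side; hence $\lim_{n\rightarrow\infty}\frac{1}{n}\sum_{i=0}^{n-1}f(x_{i+j})=f(v)$ uniformly in $j$.

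Finally I would apply Lorentz's criterion (the first theorem of the paper) to the bounded scalar sequence $\widehat{f}(x)=\{f(x_n)\}_{n=1}^{\infty}\in l^{\infty}(\mathbb{C})$: after the reindexing $t=i+j$, the uniform convergence just established says exactly that $\widehat{f}(x)$ is almost convergent with F-limit $f(v)$, which by definition means $\widehat{f}(x)$ is almost convergent to $f(v)$. As $f\in V^*$ was arbitrary, this is precisely $x_n\overset{w.a.}{\longrightarrow}v$. There is no genuine obstacle here; the only point deserving a moment's care is matching the index conventions between the vector-valued average $\frac{1}{n}\sum_{i=0}^{n-1}x_{i+j}$ and the windows $\frac{1}{n}\sum_{t=i}^{i+n-1}$ appearing in Lorentz's theorem, which amounts to the substitution $t=i+j$ together with the observation that ``uniformly in $j$'' and ``uniformly in $i$'' refer to the same family of length-$n$ windows.
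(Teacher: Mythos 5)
Your proof is correct and rests on exactly the same estimate as the paper's, namely pulling $f$ inside the Ces\`aro average and bounding $\left|f\left(\frac{1}{n}\sum_{i=0}^{n-1}x_{i+j}-v\right)\right|$ by $\|f\|\left\|\frac{1}{n}\sum_{i=0}^{n-1}x_{i+j}-v\right\|_V$. The only (cosmetic) difference is that you phrase everything through the uniform-convergence characterization (Theorem \ref{uniformly averagely convergent}) and Lorentz's criterion, whereas the paper runs the identical computation at the level of the seminorm $p$, showing $p(\widehat{f}(x)-\widetilde{f(v)})\leq\|f\|\,p(x-\widetilde{v})=0$ and invoking Theorem \ref{s.a.c.}.
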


\begin{proof}
From Theorem \ref{s.a.c.}, we just need to show that for any $f\in
V^*$, $p(\widehat{f}(x)-\widetilde{f(v)})=0$. Since
$p(x-\widetilde{v})=0$, we have
\begin{align*}
p(\widehat{f}(x)-\widetilde{f(v)})=&\lim_{n\rightarrow\infty}\left(\sup_j\frac{1}{n}\left|\sum_{i=1}^{n-2}(f(x_{i+j})-f(v))\right|\right)\\
=&\lim_{n\rightarrow\infty}\left(\sup_j\frac{1}{n}\left|f\left(\sum_{i=1}^{n-2}(x_{i+j}-v)\right)\right|\right)\\
\leq&\lim_{n\rightarrow\infty}\left(\sup_j\frac{1}{n}\|f\|\left\|\sum_{i=1}^{n-2}(x_{i+j}-v)\right\|_V\right)\\
=&\|f\|p(x-\widetilde{v})=0.
\end{align*}
\end{proof}

\begin{theorem}[J. Kurtz\cite{Kurtz}]
Suppose $x=\{x_n\}_{n=1}^{\infty}\in l^{\infty}(V)$ and $v\in V$. If
$\{x_n: n\in\mathbb{N}\}$ is precompact and
$x_n\overset{w.a.}{\longrightarrow} v$, then
$x_n\overset{s.a.}{\longrightarrow}v$.
\end{theorem}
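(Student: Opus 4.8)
The plan is to use the equivalent characterizations established earlier. By Theorem~\ref{uniformly averagely convergent}, it suffices to show that
$$\lim_{n\rightarrow\infty}\frac{1}{n}\sum_{i=0}^{n-1}x_{i+j}=v$$
uniformly in $j$. Suppose this fails. Then there is $\varepsilon_0>0$, a sequence $n_k\rightarrow\infty$, and indices $j_k$ such that $\left\|\frac{1}{n_k}\sum_{i=0}^{n_k-1}x_{i+j_k}-v\right\|_V\geq\varepsilon_0$ for all $k$. Write $a_k:=\frac{1}{n_k}\sum_{i=0}^{n_k-1}x_{i+j_k}$; each $a_k$ lies in the closed convex hull $K:=\overline{co}\{x_n:n\in\mathbb{N}\}$. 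The first key step is to invoke precompactness of $\{x_n:n\in\mathbb{N}\}$: by Mazur's theorem the closed convex hull $K$ of a precompact set is compact, so $\{a_k\}$ has a subsequence converging in norm to some $w\in K$. Relabelling, assume $a_k\rightarrow w$, and note $\|w-v\|_V\geq\varepsilon_0$, so $w\neq v$.

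The second key step is to derive a contradiction from weak almost convergence. For any $f\in V^*$, since $\widehat{f}(x)$ is almost convergent to $f(v)$ (in the scalar sense of Lorentz / Definition~\ref{Hajdukovic}'s scalar case), we have $\frac{1}{n_k}\sum_{i=0}^{n_k-1}f(x_{i+j_k})\rightarrow f(v)$ as $k\rightarrow\infty$, uniformly in the $j$-index, hence certainly along our chosen $n_k,j_k$. But $\frac{1}{n_k}\sum_{i=0}^{n_k-1}f(x_{i+j_k})=f(a_k)\rightarrow f(w)$ by continuity of $f$ and norm convergence $a_k\rightarrow w$. Therefore $f(w)=f(v)$ for every $f\in V^*$, and by Hahn--Banach $w=v$, contradicting $\|w-v\|_V\geq\varepsilon_0$. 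This completes the argument.

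The step I expect to be the main obstacle is the passage from "$\widehat{f}(x)$ is almost convergent to $f(v)$" to "$\frac{1}{n}\sum_{i=0}^{n-1}f(x_{i+j})\rightarrow f(v)$ uniformly in $j$," i.e.\ unwinding the definition of scalar almost convergence correctly — this is immediate from Lorentz's criterion (Theorem~1.1 applied with shift index $j$, or equivalently the scalar case of Definition~\ref{Hajdukovic})) but one must be careful that the uniformity is exactly what is needed to evaluate along the extracted $j_k$. A secondary technical point is justifying that $a_k\in\overline{co}\{x_n\}$: each Cesàro average $\frac{1}{n_k}\sum_{i=0}^{n_k-1}x_{i+j_k}$ is a finite convex combination of the $x_n$, so lies in $co\{x_n\}\subseteq K$, and $K$ is closed; this is routine. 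Everything else — Mazur compactness of $K$, continuity of $f$, Hahn--Banach separation — is standard and requires no computation.
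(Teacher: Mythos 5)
The paper states this result purely as a citation to Kurtz and gives no proof of its own, so there is no internal argument to compare yours against; judging the proposal on its merits, it is essentially the standard proof and is correct. The chain of reasoning --- reduce to the uniform Ces\`aro criterion via Theorem \ref{uniformly averagely convergent}; negate uniform convergence to extract $n_k\rightarrow\infty$ and $j_k$ with $\|a_k-v\|_V\geq\varepsilon_0$; observe that each average $a_k$ lies in the closed convex hull of the range; extract a norm-convergent subsequence by compactness of that hull; and identify the limit as $v$ by testing against every $f\in V^*$, where the uniformity in $j$ built into scalar almost convergence is exactly what licenses evaluation along the diagonal $(n_k,j_k)$ --- is complete. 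One technical caveat deserves attention: the paper's $V$ is only a normed space, and in an incomplete space the closed convex hull of a precompact (totally bounded) set is totally bounded but need not be compact, so Mazur's theorem as you invoke it only guarantees a Cauchy subsequence of $\{a_k\}$, not a convergent one. This is harmless but should be said: pass to the completion $\widehat{V}$, where the Cauchy subsequence converges to some $w$ with $\|w-v\|\geq\varepsilon_0$; every $f\in V^*$ extends to $\widehat{V}$ with the same norm, the same computation gives $f(w)=f(v)$ for all $f$, and Hahn--Banach applied in $\widehat{V}$ forces $w=v\in V$, yielding the same contradiction. With that one-line repair the argument is sound.
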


\begin{remark}
When $V=\mathbb{C}$, strong almost convergence and weak almost
convergence coincide, since each bounded sequence in $\mathbb{C}$ is
precompact. So we just say almost convergence there.
\end{remark}

\bibliographystyle{amsplain}

\end{document}